\newcommand{\BB}{{\cal B}}
\newcommand{\DD}{{\cal D}}
\newcommand{\EE}{{\cal E}}
\newcommand{\FF}{{\cal F}}
\newcommand{\TT}{{\cal T}}
\newcommand{\BM}{{\mathbb M}}
\newcommand{\BR}{{\mathbb R}}
\newcommand{\BE}{{\mathbb E}}
\newcommand{\BF}{{\mathbb F}}
\newcommand{\esssup}{\mathop{\mathrm{ess\,sup}}}
\newcommand{\fch}{{\mathbf{1}}}
\newcommand{\cal}{\mathcal}
\newtheorem{theorem}{\bf Theorem}[section]
\newtheorem{proposition}[theorem]{\bf Proposition}
\newtheorem{lemma}[theorem]{\bf Lemma}
\newtheorem{corollary}[theorem]{\bf Corollary}
\theoremstyle{definition}
\newtheorem{definition}[theorem]{Definition}
\newtheorem{example}[theorem]{\bf Example}
\newtheorem{remark}[theorem]{Remark}
\numberwithin{equation}{section}
\renewcommand{\thefootnote}{{}}
\g@addto@macro{\endabstract}{\@setabstract}
\newcommand{\authorfootnotes}{\renewcommand\thefootnote{\@fnsymbol\c@footnote}}%
\begin{document}

\title[Nonlinear stopping problems]{Long-time asymptotic behaviour of the value function
in nonlinear  stopping problems}

\maketitle

\begin{center}

\normalsize
\authorfootnotes
TOMASZ KLIMSIAK\footnote{e-mail: {\tt tomas@mat.umk.pl}}
\textsuperscript{1,2},
\quad ANDRZEJ ROZKOSZ \footnote{e-mail: {\tt rozkosz@mat.umk.pl} (corresponding author)}\textsuperscript{2}
\par \bigskip

\textsuperscript{1} {\small Institute of Mathematics, Polish Academy of Sciences,\\
\'{S}niadeckich 8,   00-656 Warsaw, Poland} \par \medskip

\textsuperscript{2} {\small Faculty of
Mathematics and Computer Science, Nicolaus Copernicus University,\\
Chopina 12/18, 87-100 Toru\'n, Poland }\par

\end{center}

\noindent{\bf Abstract}
We provide general conditions ensuring that the value functions of
some nonlinear stopping problems with finite horizon converge to
the value functions of the corresponding problems  with infinite
horizon. Our result can
be formulated as result on stability, with respect to time
horizon, of nonlinear $f$-expectations. We also study the rate of convergence.
Many examples are given to illustrate our results. They include the analysis
of time asymptotics of the fair prices
of American options  in a multidimensional exponential L\'evy model.
\bigskip\\
{\bf Keywords} Optimal stopping problem, nonlinear expectation, time asymptotics,
rate of convergence, American option.
\bigskip\\
{\bf Mathematics Subject Classifications (2020)}  60G40,  60H10, 91G20.

\footnotetext{This work was supported by Polish National Science Centre under grant no. 2016/23/B/ST1/01543.}

\section{Introduction}

Let $\BM=\{(X,P_x),x\in E\}$ be a Borel right Markov  process with
state space $E$ and life time $\zeta$, and $D$ be an open subset
of $E$. For  $T>0$ and measurable functions
$g:E\times\BR\rightarrow\BR$, $h,\varphi:E\rightarrow\BR$ and
$\psi:E\setminus D\to \BR$ we consider the value functions
\begin{align}
\label{eq1.01}
V_T(s,x)=\sup_{\sigma\le T_s\wedge\tau_D}\EE^{x,f}_{0,\sigma}\big[h(X_{\sigma})
&\fch_{\{\sigma<T_s\wedge\tau_D\}}+\psi(X_{\tau_D})\fch_{\{\sigma=\tau_D,
\tau_D\le T_s\}}\nonumber\\
& +\varphi(X_{T_s})\fch_{\{\sigma=T_s,  \tau_D>T_s\}}\big], \quad(s,x)\in[0,T]\times D,
\end{align}
and
\begin{equation}
\label{eq1.02}
V(x)=\sup_{\sigma\le \tau_D}\EE^{x,f}_{0,\sigma}\big[h(X_{\sigma})\fch_{\{\sigma<\tau_D\}}
+\psi(X_{\tau_D})\fch_{\{\sigma=\tau_D\}}\big],\quad x\in D.
\end{equation}
Here
\[
f(t,y)=g(X_t,y),\quad t\ge0,\,y\in\BR,
\]
$\EE^{x,f}$ is the nonlinear $f$-expectation (defined under the
measure $P_x$) introduced by Peng \cite{Pe1} (see Section \ref{sec2}),  $\sigma$ are stopping times
with respect to the minimum admissible filtration $\BF=(\FF_t)_{t\ge0}$ generated by $\BM$, and
\[
\tau_D=\inf\{t>0: X_t\notin D\},\qquad T_s=T-s.
\]
Perhaps it is appropriate at this point  to note  that under  some
assumptions, $V_T$ and  $V$ are solutions of some "usual" nonlinear stopping problems, i.e. can
be defined without recourse to the notion of nonlinear
expectation. Specifically, from our results it follows that under natural assumptions on the data $g,h,\psi$ and $\varphi$ the function  $V_T$ is a solution of the nonlinear
equation
\begin{align}
\label{eq1.2} V_T(s,x)&=\sup_{0\le\sigma\le T_s\wedge\tau_D}
\BE_x\Big[\int^{\sigma}_0g(X_t,V_T(s+t,X_t))\,dt
+h(X_{\sigma})\fch_{\{\sigma<T_s\wedge\tau_D\}}\nonumber\\
&\qquad\qquad\qquad\qquad
+\psi(X_{\tau_D})\fch_{\{\sigma=\tau_D,\tau_D\le T_s\}}
+\varphi(X_{T_s})\fch_{\{\sigma=T_s,\tau_D>T_s\}}\Big],
\end{align}
and $V$ is a solution of the equation
\begin{equation}
\label{eq1.3} V(x)=\sup_{0\le\sigma\le
\tau_D}\BE_x\Big[\int^{\sigma}_0g(X_t,V(X_t))\,dt
+h(X_{\sigma})\fch_{\{\sigma<\tau_D\}}
+\psi(X_{\tau_D})\fch_{\{\sigma=\tau_D\}}\Big],
\end{equation}
where $\BE_x$ denotes the expectation with respect to $P_x$ (for
details and a generalization see Section \ref{sec4}). We stress, however, that in some of our results the generator $f$   is  merely
continuous and nonincreasing with respect to the $y$-variable.  Therefore, in general, the integrals in \eqref{eq1.2}, \eqref{eq1.3}
involving $g$ need not be well defined. Nevertheless, one can still define and study $V_T$, $V$ defined by (\ref{eq1.01}), (\ref{eq1.02}).

In this paper, we give general
conditions on the data $g,h,\varphi,\psi$ guaranteeing that
\begin{equation}
\label{eq1.1} \lim_{T_s\rightarrow\infty} V_T(s,x)=V(x),\quad x\in D.
\end{equation}
We also provide some estimates on the rate of convergence.
In many
cases  it is very important to have some information about the
dynamic of the value functions, i.e. about the  processes
$t\mapsto V_T(s+t,\cdot,X_t)$ and $t\mapsto V(X_t)$. The second
main result of the paper is the dynamic version of (\ref{eq1.1}).
We first prove that if $D$ is Dirichlet regular (i.e.
$P_x(\tau_D>0)=0$ for $x\in\partial D$), then for every stopping
time $\alpha\le \tau_D\wedge T_s$,
\begin{align*}
V_T(s+\alpha,X_\alpha)&=\esssup_{\alpha\le\sigma\le
T_s\wedge\tau_D}\EE^{x,f}_{\alpha,\sigma} \big[h(X_{\sigma})
\fch_{\{\sigma<T_s\wedge\tau_D\}}\\
&\qquad\qquad+\psi(X_{\tau_D})\fch_{\{\sigma=\tau_D, \tau_D\le
T_s\}} +\varphi(X_{T_s})\fch_{\{\sigma=T_s,  \tau_D> T_s\}}\big],
\end{align*}
and  for every stopping time $\alpha\le\tau_D$ we have
\[
V(X_\alpha)=\esssup_{\alpha\le\sigma\le\tau_D}
\EE^{x,f}_{\alpha,\sigma}
\big[h(X_{\sigma})\fch_{\{\sigma<\tau_D\}}
+\psi(X_{\tau_D})\fch_{\{\sigma=\tau_D\}}\big],\quad x\in D.
\]
We show that these two formulas  together with the integrability condition
\[
\mathbb E_x\int_0^{T_s\wedge\tau_D}|g(X_t,V_T(s+t,X_t))|\,dt
+\BE_x\int_0^{\tau_D}|g(X_t,V(t,X_t))|\,dr<\infty
\]
imply  \eqref{eq1.2}, \eqref{eq1.3}.
Then we prove that
\begin{align}
\label{eq1.4} \nonumber V^{*}_{T_s}(x)&:=\sup_{\alpha\le \tau_D\wedge T_s}
\BE_x|V_T(s+\alpha,X_\alpha)-V(X_\alpha)| \le
\BE_x|\gamma-\varphi|(X_{T_s})|\fch_{\{\tau_D\ge T_s\}}\\
&\quad+ \BE_x\int_{T_s\wedge\tau_D}^{\tau_D}|g(X_t,\gamma(X_t))|\,dt
+\sup_{T_{s}\wedge \tau_D\le \tau\le\tau_D}
\BE_x|\gamma(X_{\tau})-h(X_{\tau})|,
\end{align}
where  $\gamma(X_t)=\BE_x(\psi(X_{\tau_D})|\FF_t)$, and that
the right-hand side of (\ref{eq1.4}) converges to zero as
$T_s\to\infty$. From (\ref{eq1.4}) and  the elementary inequality
\begin{equation}
\label{eq1.5}
\BE_x\sup_{t\le T_s\wedge\tau_D} |V_T(s+t,X_t)-V(X_t)|^q\le\frac{1}{1-q}\big(V^*_{T_s}(x)\big)^q,\quad q\in (0,1),
\end{equation}
we get uniform (in $t$) convergence of the value processes.

The key idea of our proofs of (\ref{eq1.1}) and
\eqref{eq1.4} is to look at the processes $V(X)$ and  $V_T(s+\cdot,X)$
as solutions to certain reflected backward stochastic differential
equations (RBSDEs). In the non-dynamic case this link is
immediate. Some work is required to prove it in the dynamic case.
After establishing these links, we  get the desired results by
applying  new stability results for RBSDEs obtained recently in
\cite{K:SPA3} (see  Section \ref{sec3}).

In the second part of the paper (Sections \ref{sec5} and
\ref{sec6}), we study the rate of convergence in (\ref{eq1.1}). To
this end, in Section \ref{sec5} we describe several methods of
estimating the  right-hand side of (\ref{eq1.4}). In general,
these methods are incomparable. Its applicability depends on the
underlying process $\BM$ (L\'evy type process,  (intrinsic) ultracontractive process or symmetric Hunt process related to a symmetric Dirichlet form) and assumptions on the data
$\varphi,\psi,h$ (bounded or in $L^q$ with some $q\ge1$) and $D$
(bounded, unbounded or of finite reference measure, with regular boundary or not).
To illustrate the utility of these methods, let us mention here
that in  Section \ref{sec5}, among other things,  we get several results of type
 \[
 V^*_{T_s}(x)\sim e^{-T_s\lambda(x)},\quad x\in D,\quad\mbox{as}\quad T_s\to \infty.
 \]
Depending on the structure of the problem, the function  $\lambda(\cdot)$ may be   positive  and vanishing near  the boundary of $D$,  may be constant and equal to a  number  $\lambda>0$  or be equal to the principal eigenvalue $\lambda_1$ associated with the semigroup $(P^D_t)$ of the process $\BM$ killed upon leaving $D$.

The problem of controlling the rate of decay of the  right-hand side of
(\ref{eq1.4}) is by no means obvious.
Let  $R^D$ denote the   potential operator
associated with the process $\BM$ killed upon leaving $D$. We prove that for general Markov process $\BM$ we have
\[
V^{*}_{T_s}(x)\le P^D_{T_s}w(x)+P^D_{T_s}\hat h_{\infty}(x),\quad x\in D,\,0\le s \le T,
\]
where
\[
w(x)=|\gamma-\varphi|(x)+R^D(|g(\cdot,\gamma)|)(x),\qquad
\hat h_{\infty}(x)=\sup_{\tau\le\tau_D}\hat h(x,\tau).
\]
and
\[
\hat h(x,\tau)=\BE_x|\gamma(X_\tau)-h(X_\tau)|.
\]
One easily checks that $P^D_{T_s}w(x)$ is equal to the sum of the first two terms on the right-hand side of \eqref{eq1.4}.
Thus, the study of the rate of convergence of
these  two terms  reduces to the study of the rate of
decay, as $t\rightarrow\infty$, of the semigroup $(P^D_t)$.
For this one can use some known results from the semigroup
theory and probabilistic potential theory. In Section \ref{sec5}, we indicate some of them. The third
term on the right-hand side of (\ref{eq1.4}) causes  specific problems due to the compound term $\hat h_\infty$. We show how to deal with this term
in some  typical situations,  for instance, when $h(X)$ is a submartingale or supermartingale under $P_x$ for $x\in E$ or $\gamma,h\in C^2_b(\BR^d)$ and the generator of $\BM$ is a L\'evy-type operator. Another interesting situation we consider is when $\BM$ is associated with a symmetric regular Dirichlet form and $\gamma,h$ belong to its extended domain. In this case, in general, the process $h(X)$ need not be a semimartingale.

In Section \ref{sec6}, we provide a detailed analysis of (\ref{eq1.1}) in a stopping problem arising in the study of American options. We consider dividend paying options in an exponential L\'evy model. We show that if the payoff function is continuous and satisfies the linear growth condition, then  under natural assumptions on the model the fair price of the option with maturity $T$ converges as $T\rightarrow\infty$ to the fair price of the corresponding perpetual American option, and we provide the rate of convergence.

In the present paper, we deal exclusively with stopping problems and RBSDEs with one (lower) barrier. RBSDEs with two continuous barriers satisfying Mokobodzki's separability condition and some integrability conditions were introduced in \cite{CK} in case the underlying filtration is Brownian. In \cite{CK} also a connection of such equations with a pair of some coupled optimal stopping problems (so-called Dynkin's games) is given. At present, the theory initiated in \cite{CK} is quite well developed. The existence and uniqueness of  solutions of RBSDEs with possibly infinite terminal time is known to hold under fairly general assumptions on the data and for general filtration (see \cite{K:SPA3,KR:EJP}
and the references therein). It is also known that under fairly general assumptions the value functions in Dynkin's games can be represented by solutions of RBSDEs (see \cite{K:SPA3,KR:EJP}). It would be interesting to use these results to give, in the case of two barriers, some asymptotic results similar to those given in the present paper.

\section{RBSDEs and nonlinear optimal stopping problem}
\label{sec2}

For the sake of completeness, in this section we recall some known results on the existence and uniqueness of solutions to backward stochastic differential equations (BSDEs) and reflected BSDEs (RBSDEs) with one barrier.

In what follows $(\Omega,\FF,P)$ is a complete probability space
and $\BF=(\FF_t)_{t\ge0}$ is a right-continuous  filtration
satisfying the usual conditions. We denote by $\TT$ the set of all $\BF$-stopping times, and for given $\BF$-stopping times $\tau,\sigma$ such that $\tau\le\sigma$ we denote by $\TT^{\sigma}_{\tau}$ the set of all $\BF$-stopping times $\alpha$ such that $\tau\le\alpha\le\sigma$. To simplify notation, we write $\TT^{\sigma}$ for $\FF^{\sigma}_0$ and $\TT_{\tau}$ for $\TT^{\infty}_{\tau}$.

Below we assume as given an $\BF$-stopping time $\vartheta$ (not
necessarily finite),  an $\FF_{\vartheta}$-measurable random
variable $\xi$, an $\BF$-adapted c\`adl\`ag  process $L$ of class
(D) and a function $f:\Omega\times [0,\infty)\times\BR\rightarrow
\BR$ such that $f$ is $\BF$-adapted with respect to
$(\omega,t)\in\Omega\times [0,\infty)$ for any fixed $y\in\BR$. As
usual, in the sequel in our notation we omit the dependence of $f$
on $\omega\in\Omega$.

Recall that a c\`adl\`ag $\BF$-adapted process $Y$ is said to be of class  (D) under
the measure $P$ if the collection of random variables
$\{Y_{\tau}:\tau\in\TT,\ \tau<\infty\}$ is uniformly integrable under $P$.

\begin{definition}
We say that a pair $(Y,M)$ of $\mathbb F$-adapted c\`adl\`ag processes  is a
solution, on the interval $[0,\vartheta]$, of the  BSDE  with
terminal condition $\xi$ and coefficient $f$
(BSDE${}^{\vartheta}(\xi,f)$ for short) if $Y$ is a  process of
class (D),  $M$ is a local martingale such that $M_0=0$,  $P$-a.s., and the following
conditions are satisfied $P$-a.s.:
\begin{enumerate}[(a)]
\item $\int^{a\wedge\vartheta}_0|f(t,Y_t)|\,dt<\infty$ for every $a\ge0$,
\item For every $a\ge0$,
\begin{equation}
\label{eq2.1}
Y_t=Y_{a\wedge\vartheta}+\int^{a\wedge\vartheta}_{t\wedge
\vartheta}f(r,Y_r)\,dr  -\int^{a\wedge\vartheta}_{t\wedge
\vartheta}\,dM_r,\quad t\in[0,a],
\end{equation}
\item $Y_{a\wedge\vartheta}\rightarrow\xi$ $P$-a.s.
as $a\rightarrow\infty$.
\end{enumerate}

\end{definition}

We will need the following assumptions.
\begin{enumerate}
\item[(A1)] $E|\xi|<\infty$  and there exists a c\`adl\`ag process $S$
such that $S$ is a difference of two supermartingales of class (D)
and $E\int_0^{\vartheta}|f(t,S_t)|\,dt<\infty$.

\item[(A2)] For a.e. $t\in [0,\vartheta)$ the function $y\mapsto f(t,y)$ is non-increasing  $P$-a.s.

\item[(A3)] For a.e. $t\in [0,\vartheta)$ the function $y\mapsto f(t,y)$
is continuous $P$-a.s.

\item[(A4)] For every $y\in\BR$,
$\int_0^{\vartheta}|f(t,y)|\,dt<\infty$ $P$-a.s.

\item[(A5)] $L$ is a c\`adl\`ag adapted process of class (D)
such that $\limsup_{a\rightarrow\infty}L_{a\wedge\vartheta}\le\xi$,

\item[(A6)]There exists a process $U$ such that $L\le U$, $U$
is a difference of two supermartingales of class (D) and
$E\int_0^{\vartheta}f^-(t,U_t)\,dt<\infty$.
\end{enumerate}

\begin{theorem}
\label{th2.2}
\begin{enumerate}[\rm(i)]
\item Under \mbox{\rm(A2)} there exists at most one solution of
\mbox{\rm BSDE}${}^{\vartheta}(\xi,f)$.

\item If  \mbox{\rm (A1)--(A4)} are satisfied, then  there exists
a solution to \mbox{\rm BSDE}${}^{\vartheta}(\xi,f)$. Moreover, $M$ is a uniformly integrable martingale and
\begin{equation}
\label{eq2.2}
E\int^{\vartheta}_0|f(t,Y_t)|\,dt<\infty.
\end{equation}
\end{enumerate}
\end{theorem}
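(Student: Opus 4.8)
The plan is to treat uniqueness and existence separately, relying throughout on the class~(D) hypotheses to turn local martingales into genuinely integrable objects and on the monotonicity (A2) to absorb the generator into an increasing process.

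For uniqueness (i), suppose $(Y^1,M^1)$ and $(Y^2,M^2)$ both solve BSDE${}^{\vartheta}(\xi,f)$ and put $\delta Y=Y^1-Y^2$, $\delta M=M^1-M^2$. Subtracting the two instances of (\ref{eq2.1}), the semimartingale $\delta Y$ has, on $[0,\vartheta]$, finite-variation part driven by $(f(s,Y^1_s)-f(s,Y^2_s))\,ds$ and martingale part $\delta M$. Applying the Meyer--It\^o (Tanaka) formula to $|\delta Y|$, the drift contribution equals $-\int\sgn(\delta Y_{s-})(f(s,Y^1_s)-f(s,Y^2_s))\,ds$, which is nonnegative by (A2), while the local-time and jump terms are nonnegative as well; hence $|\delta Y|$ is the sum of a local martingale and an increasing process. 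Since $Y^1,Y^2$ are of class~(D), the integral $\int\sgn(\delta Y_{s-})\,d\delta M_s$ is a uniformly integrable martingale, so $|\delta Y|$ is a genuine submartingale on $[0,\vartheta]$ and $|\delta Y_{t\wedge\vartheta}|\le E(|\delta Y_{a\wedge\vartheta}|\mid\FF_{t\wedge\vartheta})$. Letting $a\to\infty$ and using condition (c) of the definition together with class~(D) sends the right-hand side to $0$ in $L^1$, so $\delta Y\equiv0$; the equation then forces $\delta M\equiv0$.

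For existence (ii) I would use a monotone Lipschitz approximation. Replace $f$ by its inf-convolution $f_n(t,y)=\inf_{z\in\BR}\{f(t,z)+n|y-z|\}$; each $f_n$ is Lipschitz in $y$ with constant $n$, still satisfies (A2), and by the continuity (A3) increases pointwise to $f$. For Lipschitz generators the existence of a unique solution $(Y^n,M^n)$ in the class~(D)/infinite-horizon framework is standard (a contraction argument), and the comparison principle for monotone generators gives $Y^n\le Y^{n+1}$, so $Y_t:=\lim_n Y^n_t$ exists. The core of the argument is a set of uniform a priori bounds obtained by comparing $Y^n$ with the reference process $S$ supplied by (A1): writing $S$ as a difference of supermartingales and applying Tanaka to $|Y^n-S|$, the cross term $\sgn(Y^n-S)(f_n(\cdot,Y^n)-f_n(\cdot,S))$ is $\le0$ by monotonicity, and separating the sets $\{Y^n\ge S\}$ and $\{Y^n<S\}$ lets one dominate $|f_n(\cdot,Y^n)|$ by $|f_n(\cdot,S)|\le|f(\cdot,S)|$; since $E\int_0^{\vartheta}|f(t,S_t)|\,dt<\infty$ this yields $\sup_n E\int_0^{\vartheta}|f_n(t,Y^n_t)|\,dt<\infty$ together with a uniform class-(D) bound on $Y^n$, hence a uniform bound on the martingales $M^n$.

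It then remains to pass to the limit. From the uniform estimates one extracts convergence of $M^n$ (in an $\mathcal H^1$- or $L^1$-type sense, using a concave-function It\^o estimate of Briand--Delyon--Hu--Pardoux--Stoica type to handle the merely integrable data), identifies the limiting martingale $M$, and shows via (A3) and a dominated/monotone convergence argument that $\int_0^{a\wedge\vartheta}f_n(s,Y^n_s)\,ds\to\int_0^{a\wedge\vartheta}f(s,Y_s)\,ds$; passing to the limit in (\ref{eq2.1}) shows that $(Y,M)$ solves BSDE${}^{\vartheta}(\xi,f)$, and the uniform $L^1$ bound on the generator gives (\ref{eq2.2}), whence $M$ is a uniformly integrable martingale. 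I expect the main obstacle to be precisely these two uniform controls under only $L^1$/class~(D) integrability and a general (non-Brownian) filtration: securing the $L^1$ bound on $f_n(\cdot,Y^n)$ from (A1) and the convergence of the martingale parts $M^n$, since neither the quadratic-integrability tools of the Lipschitz $L^2$ theory nor Brownian martingale-representation arguments are available here.
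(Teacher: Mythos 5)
The paper does not actually prove Theorem \ref{th2.2}: both parts are delegated to \cite{K:SPA2} (Proposition 2.4, Theorem 2.9, Remark 2.2), so any self-contained argument is necessarily ``a different route''. Your part (i) is fine: the Meyer--Tanaka computation showing that $|Y^1-Y^2|$ is a local submartingale, combined with the fact that a local submartingale of class (D) is a true submartingale and with condition (c) of the definition as $a\to\infty$, is exactly the mechanism behind the cited uniqueness result.

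Part (ii) has the right architecture (monotone Lipschitz approximation, comparison, a priori bounds via the reference process $S$ from (A1), passage to the limit), but the specific approximation you chose breaks down. First, under (A2)--(A4) the generator is only assumed nonincreasing and continuous in $y$, with no lower growth bound; for $f(t,y)=-e^{y}$ the inf-convolution $f_n(t,y)=\inf_{z}\{f(t,z)+n|y-z|\}$ is identically $-\infty$, so your Lipschitz approximants need not exist. The standard fix is to first truncate in $y$, setting $f^k(t,y)=f(t,(-k)\vee y\wedge k)$, which preserves (A2) and is bounded between $f(t,k)$ and $f(t,-k)$ (hence integrable by (A4)), and only then regularize. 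Second, the inequality $|f_n(\cdot,S)|\le|f(\cdot,S)|$ that you use to close the uniform estimate is false for inf-convolutions: one only has $f_n\le f$, so $f_n(t,S_t)^-$ may well exceed $f(t,S_t)^-$, and (A1) gives integrability of $f(t,S_t)$ only, not of $f_1(t,S_t)$. So the key uniform bound $\sup_nE\int_0^{\vartheta}|f_n(t,Y^n_t)|\,dt<\infty$ is not secured as written. These are repairable defects (truncate first, then run the $\sgn(Y^n-S)$ estimate with the truncated generator, whose value at $S$ is genuinely dominated by $|f(\cdot,k)|+|f(\cdot,-k)|+|f(\cdot,S)|$-type quantities), but as stated the existence proof has a genuine gap precisely at the two points you yourself flagged as the main obstacles.
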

\begin{proof}
Part (i) is a direct consequence of \cite[Proposition
2.4]{K:SPA2}. For (ii) see  \cite[Theorem 2.9]{K:SPA2} and \cite[Remark 2.2]{K:SPA2}.
\end{proof}


We now  recall  the notion of the nonlinear $f$-expectation introduced by Peng \cite{Pe1} (see also \cite{Pe2}).
For $\alpha,\beta\in\TT$ such that $\alpha\le\beta$ and  $f$
satisfying (A1)--(A4) we define the operator
\[
\EE^f_{\alpha,\beta}: L^1(\Omega,\FF_\beta,P)\rightarrow
L^1(\Omega,\FF_\alpha,P)
\]
by
\[
\EE^f_{\alpha,\beta}(\xi)= Y_\alpha,\qquad \xi\in L^1(\Omega,\FF_\beta,P),
\]
where $(Y,M)$ is the unique solution of  BSDE$^\beta(\xi,f)$. Note
that in general $\EE^f$ is a nonlinear operator. Applying It\^o's
formula shows that if $f$ is linear of the form
$f(t,y)=-c(t)y+b(t)$ for  some $\mathbb F$-adapted processes $b$
and $c\ge0$, then
\[
\EE^f_{\alpha,\beta}(\xi)=E\Big(e^{-\int_{0}^{\beta}c(r)\,dr}\xi+\int_0^\beta e^{-\int_{0}^{t}c(r)\,dr}b(t)\,dt\,\big|\,\FF_\alpha\Big).
\]

We say that a c\`adl\`ag process $X$ of class (D) is an
$\EE^f$-supermartingale (resp. $\EE^f$-submartingale) on
$[\alpha,\beta]$ if $\EE^f_{\sigma,\tau}(X_\tau)\le X_\sigma$
(resp. $\EE^f_{\sigma,\tau}(X_\tau)\ge X_\sigma$) for all
$\tau,\sigma\in \TT$ such that $\alpha\le\sigma\le\tau\le\beta$.
Of course, $X$ is called an $\EE^f$-martingale on $[\alpha,\beta]$
if it is both $\EE^f$-supermartingale and $\EE^f$-submartingale on
$[\alpha,\beta]$. For a given c\`adl\`ag process $V$ and stopping
times $\alpha,\beta$ ($\alpha\le \beta$) we denote  by
$|V|_{\alpha,\beta}$ the total variation of the process $V$ on
$[\alpha,\beta]$.

\begin{proposition}
\label{prop3.2} Assume that  $f$ satisfies \mbox{\rm(A1)--(A4)}
and let $\alpha,\beta\in\TT^{\vartheta}$ be such that $\alpha\le\beta$.
\begin{enumerate}
\item[\rm(i)] Let $\xi\in L^1(\Omega,\FF_\beta;P)$ and $G$ be a c\`adl\`ag $\BF$-adapted finite
variation process such that  $G_\alpha=0$ and
$E|G|_{\alpha,\beta}<\infty$. Then there exists a unique solution
$(X,N)$ of  BSDE$^{\alpha,\beta}(\xi,f+dG)$. Moreover,  if $G$
(resp. $-G$) is an increasing process, then $X$ is an
$\EE^f$-supermartingale (resp. $\EE^f$-submartingale) on
$[\alpha,\beta]$.

\item[\rm(ii)] If  $\xi_1,\xi_2\in L^1(\Omega,\FF_\beta,P)$ and $\xi_1\le\xi_2$, then $\EE^f_{\alpha,\beta}(\xi_1)\le\EE^f_{\alpha,\beta}(\xi_2)$.

\item[\rm(iii)] Let $f_1,f_2$ satisfy \mbox{\rm(A1)--(A4)} and $\alpha,\beta_1,\beta_2\in\TT^{\vartheta}$ be such that  $\alpha\le\beta_1\le\beta_2$. Then for any  $\xi_1\in L^1(\Omega,\FF_{\beta_1},P)$ and  $\xi_2\in L^1(\Omega,\FF_{\beta_2},P)$,
\begin{align*}
|\EE^{f_1}_{\alpha,\beta_1}(\xi_1)-\EE^{f_2}_{\alpha,\beta_2}(\xi_2)|&\le
E\Big(|\xi_1-\xi_2|+\int_\alpha^{\beta_1}|f^1(t,Y^1_t)-f^2(t,Y^1_t)|\,dt
\\
&\quad+\int_{\beta_1}^{\beta_2}|f^2(t,Y^2_t)|\,dt\,\big|\,\FF_\alpha\Big),
\end{align*}
where $Y^1_t=\EE^{f^1}_{t\wedge\beta_1,\beta_1}(\xi_1)$,
$Y^2_t=\EE^{f^2}_{t\wedge\beta_2,\beta_2}(\xi_2)$.
\end{enumerate}
\end{proposition}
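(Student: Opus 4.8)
For part (i), the plan is to remove the driver $dV$ by the shift $W_t:=X_t+V_t$. Writing BSDE$^{\alpha,\beta}(\xi,f+dV)$ in integral form and using $V_\alpha=0$, a direct computation shows that $(X,N)$ solves it if and only if $(W,N)$ solves BSDE$^{\alpha,\beta}(\xi+V_\beta,\tilde f)$ with $\tilde f(t,w):=f(t,w-V_t)$. Since $E|V|_{\alpha,\beta}<\infty$, the terminal value $\xi+V_\beta$ is integrable, and $\tilde f$ inherits (A2)--(A3) trivially, (A4) from the monotonicity of $f$ together with the boundedness of the càdlàg path $V$ on $[\alpha,\beta]$, and (A1) with the process $\tilde S:=S+V$. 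Here one uses that an adapted finite-variation process of integrable variation is a difference of two supermartingales of class (D), so $S+V$ is again such a difference and $E\int_\alpha^\beta|\tilde f(t,\tilde S_t)|\,dt=E\int_\alpha^\beta|f(t,S_t)|\,dt<\infty$. Existence and uniqueness then follow from Theorem \ref{th2.2}, with $X=W-V$. For the monotonicity statement, fix $\alpha\le\sigma\le\tau\le\beta$ and compare, on $[\sigma,\tau]$, the process $X$ (solving BSDE$^{\sigma,\tau}(X_\tau,f+dV)$) with the solution $Z$ of BSDE$^{\sigma,\tau}(X_\tau,f)$, i.e. with $\EE^f_{\cdot,\tau}(X_\tau)$. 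When $V$ is increasing the extra term $\int_\cdot^\tau dV_s\ge0$ acts as a nonnegative perturbation of the generator, so the comparison principle \cite[Proposition 2.4]{K:SPA2} gives $X_\sigma\ge Z_\sigma=\EE^f_{\sigma,\tau}(X_\tau)$; hence $X$ is an $\EE^f$-supermartingale on $[\alpha,\beta]$, the case $-V$ increasing being symmetric. Part (ii) is then immediate from the same comparison principle: $\EE^f_{\alpha,\beta}(\xi_i)=Y^i_\alpha$, where $Y^1,Y^2$ solve BSDE$^\beta(\xi_i,f)$ with the same $f$ and $\xi_1\le\xi_2$, so $Y^1_\alpha\le Y^2_\alpha$.

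For part (iii) I would first use the flow property of the $f_2$-expectation to reduce to the common interval: $\EE^{f_2}_{\alpha,\beta_2}(\xi_2)=\EE^{f_2}_{\alpha,\beta_1}(Y^2_{\beta_1})$, so that on $[\alpha,\beta_1]$ both $Y^1$ and $Y^2$ are first components of solutions to BSDEs on $[\alpha,\beta_1]$, with terminal data $\xi_1$ and $Y^2_{\beta_1}$ and coefficients $f_1$ and $f_2$. Applying the Meyer--Itô/Tanaka formula to $|Y^1-Y^2|$, discarding the nonnegative local-time and jump corrections, and taking conditional expectation (the martingale parts are uniformly integrable by Theorem \ref{th2.2}(ii) and drop out), the monotonicity of $f_2$ via the sign term $\sgn(Y^1_{t-}-Y^2_{t-})(f_2(t,Y^1_t)-f_2(t,Y^2_t))\le0$ yields the a priori estimate
\[
|Y^1_\alpha-Y^2_\alpha|\le E\Big(|\xi_1-Y^2_{\beta_1}|+\int_\alpha^{\beta_1}|f_1(t,Y^1_t)-f_2(t,Y^1_t)|\,dt\,\big|\,\FF_\alpha\Big).
\]
It then remains to absorb $|\xi_1-Y^2_{\beta_1}|$. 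Since $M^2$ is a uniformly integrable martingale and $\xi_1$ is $\FF_{\beta_1}$-measurable, the equation for $Y^2$ on $[\beta_1,\beta_2]$ gives $\xi_1-Y^2_{\beta_1}=E(\xi_1-\xi_2-\int_{\beta_1}^{\beta_2}f_2(t,Y^2_t)\,dt\mid\FF_{\beta_1})$, whence $|\xi_1-Y^2_{\beta_1}|\le E(|\xi_1-\xi_2|+\int_{\beta_1}^{\beta_2}|f_2(t,Y^2_t)|\,dt\mid\FF_{\beta_1})$; conditioning on $\FF_\alpha$ and using the tower property produces exactly the three terms on the right-hand side of the asserted inequality.

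The main obstacle is the a priori estimate in part (iii): making the Tanaka/Meyer--Itô computation rigorous for càdlàg BSDEs, namely controlling the signs of the jump and local-time corrections, justifying the interchange of $\sgn(Y_{t-})$ with the monotonicity inequality evaluated at time $t$ (harmless because càdlàg paths jump only countably often, so the discrepancy is $dt$-null), and verifying that the stochastic-integral parts are genuine uniformly integrable martingales whose conditional expectations vanish. Once this estimate is available, the reduction in (i), the comparison in (ii), and the flow/conditioning arguments completing (iii) are comparatively routine.
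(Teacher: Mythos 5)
Your proof is correct, but it is considerably more self-contained than the paper's, which for the existence in (i), the comparison in (ii), and the estimate in (iii) simply cites \cite[Theorem 2.9]{K:SPA2} and \cite[Proposition 2.4]{K:SPA2}, and only writes out the supermartingale argument. On that last point you and the paper do exactly the same thing: on $[\sigma,\tau]$ compare $X$, viewed as a solution of BSDE${}^{\sigma,\tau}(X_\tau,f+dV)$, with the solution of BSDE${}^{\sigma,\tau}(X_\tau,f)$ and invoke the comparison principle. Where you differ is in reconstructing the cited inputs. For (i) you eliminate the driver $dV$ by the substitution $W=X+V$ and check that the shifted data satisfy (A1)--(A4) (using that a process of integrable total variation is a difference of class (D) supermartingales, so $S+V$ is admissible in (A1), and that (A4) for $\tilde f(t,w)=f(t,w-V_t)$ follows from (A2) together with the a.s.\ boundedness of $V$ on $[\alpha,\beta]$ via a countable family of fixed levels); this buys an existence proof that needs only Theorem \ref{th2.2}, rather than the more general existence result of \cite{K:SPA2} for equations driven by a finite variation term. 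For (iii) you give the standard a priori estimate: reduce to the common interval $[\alpha,\beta_1]$ by the flow property, apply the Meyer--Tanaka formula to $|Y^1-Y^2|$, split $f_1(t,Y^1_t)-f_2(t,Y^2_t)$ so that (A2) for $f_2$ kills the dangerous term, and absorb $|\xi_1-Y^2_{\beta_1}|$ by writing the equation for $Y^2$ on $[\beta_1,\beta_2]$ and conditioning; this is in substance a proof of the cited \cite[Theorem 2.9]{K:SPA2}. The technical points you flag (signs of the local-time and jump corrections, replacing $\mbox{\rm sign}(Y^1_{t-}-Y^2_{t-})$ by $\mbox{\rm sign}(Y^1_{t}-Y^2_{t})$ for $dt$-a.e.\ $t$, and the uniform integrability of the martingale parts coming from Theorem \ref{th2.2}(ii)) are exactly the ones that need checking, and they all go through.
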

\begin{proof}
Assertion (iii) follows from \cite[Theorem 2.9]{K:SPA2}  and (ii)
follows from \cite[Proposition 2.4]{K:SPA2}. The existence part in
(i) follows from \cite[Theorem 2.9]{K:SPA2}. Now assume that $X$
is as in (i) and $G$ is an increasing process. Let
$\sigma,\tau\in\TT$ be such that
$\alpha\le\sigma\le\tau\le\beta$, and let $(X^\tau,N^\tau)$ be a
solution of BSDE$^{\alpha,\tau}(X_\tau,f)$. It is clear that
$(X,N)$ is a solution of BSDE$^{\alpha,\tau}(X_\tau,f+dG)$.
Therefore, by \cite[Proposition 2.4]{K:SPA2}, $X\ge X^\tau$ on
$[\alpha,\tau]$. In particular, $X_\sigma\ge X^\tau_\sigma$.  By
the definition of the nonlinear expectation,
$\EE^f_{\sigma,\tau}(X_\tau)=X^\tau_\sigma$, so
$\EE^f_{\sigma,\tau}(X_\tau)\le X_\sigma$. A similar  reasoning
in the case where $-V$ is increasing gives the result.
\end{proof}

\begin{definition}
We say that a triple  $(Y,M,K)$ of adapted c\`adl\`ag  processes
is a solution, on the interval $[0,\vartheta]$, of the  RBSDE  with
terminal condition $\xi$,  coefficient $f$ and barrier $L$
(RBSDE${}^{\vartheta}(\xi,f,L)$ for short) if $Y$ is a  of class (D),
$M$ is a local martingale such that $M_0=0$, $K$ is an increasing
process with $K_0=0$ and the following conditions are satisfied
$P$-a.s.:
\begin{enumerate}[(a)]
\item $\int^{a\wedge\vartheta}_0|f(t,Y_t)|\,dt<\infty$ for every $a\ge0$.

\item For every $a\ge0$,
\begin{equation}
\label{eq2.3}
Y_t=Y_{a\wedge\vartheta}+\int^{a\wedge\vartheta}_{t\wedge\vartheta}
f(r,Y_r)\,dr+\int^{a\wedge\vartheta}_{t\wedge\vartheta}dK_r
-\int^{a\wedge\vartheta}_{t\wedge\vartheta}\,dM_r,\quad t\in[0,a],
\end{equation}
\item $Y_t\ge L_t$, $t\in[0,a\wedge\vartheta]$ and
$\int_0^{a\wedge\vartheta}(Y_{t-}-L_{t-})\,dK_t=0$ for every $a\ge0$,

\item $Y_{a\wedge\vartheta}\rightarrow\xi$  as $a\rightarrow\infty$.
\end{enumerate}
\end{definition}

Let  $\eta$ be  a strictly positive $\mathbb F$-progressively
measurable process such that $\eta\le 1$ and
\begin{equation}
\label{eq2.5} E\int_0^\vartheta\eta_t(S_t-L_t)^-\,dt<\infty,
\end{equation}
where $S$ is the process appearing in \mbox{\rm(A1)}.
As an example of such $\eta$ can serve any
deterministic strictly positive  bounded by $1$ process
such that $\int^{\infty}_0\eta_t\,dt<\infty$. Since $S$ and $L$
are assumed to be of class (D), $\eta$ satisfies (\ref{eq2.5}).
We let
\begin{equation}
\label{eq2.4} f_n(t,y)=f(t,y)+n\eta_t(y-L_t)^-,\quad t\ge 0,\,
y\in\BR.
\end{equation}

\begin{theorem}
\label{th2.5} Assume that \mbox{\rm (A1)--(A5)} are satisfied.
Then
\begin{enumerate}[\rm(i)]
\item There exists a unique  solution $(Y,M,K)$ to
\mbox{\rm RBSDE}${}^{\vartheta}(\xi,f,L)$.

\item Let $\xi$ be an $\FF_{\vartheta}$-measurable integrable
random variable. Then for
every $n\ge1$ there exists a unique solution  $(Y^n,M^n)$ of
\mbox{\rm BSDE}$^\vartheta(\xi,f_n)$ with $f_n$ defined by
\mbox{\rm(\ref{eq2.4})}, and $Y^n\nearrow Y$ $P$-a.s. as
$n\rightarrow\infty$.

\item If  we assume additionally  that \mbox{\rm (A6)} is satisfied,
then  $M$ is a uniformly
integrable martingale, $EK_{\vartheta}<\infty$ and
\mbox{\rm{(\ref{eq2.2})}} is satisfied.
\end{enumerate}
\end{theorem}
\begin{proof}
See \cite[Proposition A.16]{K:SPA3} and  \cite[Theorem 3.9]{K:SPA2}.
\end{proof}

For $\varepsilon>0$, we set
\begin{equation}
\label{eq5.4.af} \sigma_\varepsilon=\inf\{t\ge\alpha: Y_t\le
L_t+\varepsilon\}\wedge\vartheta.
\end{equation}

\begin{theorem}
\label{th3.3}
Assume that \mbox{\rm(A1)--(A5)} are satisfied.
Then $(Y,M,K)$ is a solution of {\rm RBSDE}$^{\vartheta}(\xi,f,L)$ if and only
if  for every $\alpha\in\TT^{\vartheta}$,
\begin{align}
\label{eq4.5f} Y_\alpha= \esssup_{\sigma\in
\TT^{\vartheta}}\EE^f_{\alpha,\sigma}
(L_\sigma\fch_{\{\sigma<\vartheta\}}+\xi\fch_{\{\sigma=\vartheta\}}).
\end{align}
\end{theorem}
\begin{proof}
Let $\sigma\in\TT^{\vartheta}$ and  $\{\delta_n\}$ be a fundamental
sequence for the local martingale $M$ on $[\alpha, \infty)$. From the minimality
condition we deduce that $(Y,M)$ is a solution of
BSDE$^{\alpha,\sigma_\varepsilon}(Y_{\sigma_\varepsilon},f)$. By
Proposition \ref{prop3.2}(i), $Y$ is an $\EE^f$-martingale on
$[\alpha,\sigma_\varepsilon]$. Hence
\[
Y_\alpha=\EE^f_{\alpha,\sigma_\varepsilon}(Y_{\sigma_\varepsilon}).
\]
On the other hand, by the definition of $\sigma_\varepsilon$ and
Proposition \ref{prop3.2}(ii), it follows  from the above
equality that
\[
Y_\alpha\le\EE^f_{\alpha,\sigma_\varepsilon}
(L_{\sigma_\varepsilon}\mathbf{1}_{\{\sigma_\varepsilon<\vartheta\}}
+\varepsilon+\xi\mathbf{1}_{\{\sigma_\varepsilon=\vartheta\}}).
\]
From this and Proposition \ref{prop3.2}(iii) we get
\begin{equation}
\label{eq5.aaggf} Y_\alpha\le \EE^f_{\alpha,\sigma_\varepsilon}
(L_{\sigma_\varepsilon}\mathbf{1}_{\{\sigma_\varepsilon<\vartheta\}}
+\xi\mathbf{1}_{\{\sigma_\varepsilon=\vartheta\}})+\varepsilon,
\end{equation}
from which one can easily deduce that
(\ref{eq4.5f}) is satisfied.
To prove the  sufficiency part, we denote by
$\bar Y_{\alpha}$ the right-hand side of (\ref{eq4.5f}). By \cite[Theorem
3.9]{K:SPA2}, there exists a unique solution $(Y,M,K)$ of
RBSDE$^{\vartheta}(\xi,f,L)$. By the necessity part in (i),
$\bar Y_\alpha=Y_\alpha$, $\alpha\in \TT^{\vartheta}$, so $(\bar Y,M,K)$ is a solution
of RBSDE$^{\vartheta}(\xi,f,L)$.
\end{proof}

\section{Stability results for solutions of RBSDEs}
\label{sec3}

In this section, we prove stability results for solutions of RBSDEs.
For given $\alpha,\beta\in\TT^{\vartheta}$ such that $\alpha\le\beta$ we set
\begin{equation}
\label{eq4.4}
\|Y\|_{1,\alpha,\beta}=\sup_{\alpha\le \tau\le\beta, \tau<\infty}E|Y_\tau|,\qquad \|Y\|_{1,\beta}=\|Y\|_{1,0,\beta}.
\end{equation}

In what follows, $L^1$ and $L^2$ are c\`adl\`ag adapted processes
of class (D).

\begin{proposition}
\label{prop3.1} Assume that $\xi^1,\xi^2$ are
$\FF_{\vartheta}$-measurable and $E|\xi^1|+E|\xi^2|<\infty$. Let
$(Y^i,M^i,K^i)$ be a solution of  \mbox{\rm
RBSDE}$^{\vartheta}(\xi^i,f^i,L^i)$, $i=1,2$,  and $f^1$  satisfy
\mbox{\rm(A2)}. Then
\[
\|Y^1-Y^2\|_{1;\alpha}\le
E|\xi^1-\xi^2|+E\int_0^{\vartheta}|f^1(t,Y^2_t)-f^2(t,Y^2_t)|\,dt
+\|L^1-L^2\|_{1;\alpha}.
\]
\end{proposition}
\begin{proof}
See \cite[Corollary 3.15, Remark 4.2]{K:SPA3}.
\end{proof}

\begin{remark}
One can get the above result by using  the  representation
(\ref{eq4.5f}) and properties of nonlinear expectation. However,
to apply this second method one has to impose   much stronger
conditions  on $f^1, f^2$. This is due to the fact that the
nonlinear expectations $\EE^{f^1}$ and $\EE^{f^2}$ are well
defined under (A1)--(A4).
\end{remark}

For a finite variation process $C$, we  denote  by $|C|_t$ its
total variation on the interval $[0,t]$.

\begin{theorem}
\label{th4.3} Let $\alpha,\beta\in \TT^{\vartheta}$ be such that
$\alpha\le \beta$ and let $\xi^1\in \FF_\alpha, \xi^2\in
\FF_\beta$ satisfy $E|\xi^1|+E|\xi^2|<\infty$.    Suppose that
$(Y^1,M^1,K^1)$ is a solution of \mbox{\rm
RBSDE}$^\alpha(\xi^1,f,L^1)$ and $(Y^2,M^2,K^2)$ is a solution of
\mbox{\rm RBSDE}$^\beta(\xi^2,f,L^2)$ with some $f$ satisfying
\mbox{\rm(A2)}. Then
\begin{enumerate}[\rm(i)]
\item $\|Y^1-Y^2\|_{1;\alpha}\le E|\xi^1-Y^2_\alpha|+\|L^1-L^2\|_{1;\alpha}$.
\item Set $\tilde Y^1_t=
Y^1_t\mathbf{1}_{[0,\alpha)}(t)+H_t\fch_{[\alpha,\beta]}(t)\fch_{\{\alpha<\infty\}}$, where  $H$ is  a c\`adl\`ag process which is a difference of two
supermartingales of class \mbox{\rm(D)} on $[0,\beta]$ with $\lim_{a\to\infty} H_{\beta\wedge a}=H_\beta$. Let  $H_t=H_0+C_t+N_t$ be  the   Doob--Meyer decomposition of $H$ ($C$ is a predictable finite variation process with $C_0=0$,
and $N$ is a martingale with $N_0=0$). Then
\begin{align}
\label{eq2.0.1}\nonumber &\|\tilde Y^1-Y^2\|_{1;\beta}
\le E|\xi^1-\xi^2|\fch_{\{\alpha=\infty\}}+
E|\xi^{1}-H_\alpha|+E|\xi^2-H_\beta|\\
&\quad+E\int_\alpha^\beta|f(t,H_t)|\,dt
+E\int_\alpha^\beta\,d|C|_t
+\|H-L^2\|_{1;\alpha,\beta}+\|L^1-L^2\|_{1;\alpha}.
\end{align}
\end{enumerate}
\end{theorem}
\begin{proof}
Observe that $(Y^2,M^2,K^2)$ is a solution of RBSDE$^\alpha(Y^2_\alpha,f,L^2)$.
Therefore (i) follows immediately from  Proposition \ref{prop3.1}.
To prove (ii), set
\[
N^\alpha_t=N_{t\vee \alpha}-N_\alpha,\qquad C^\alpha_t=C_{t\vee\alpha}-C_\alpha,
\]
\[
\tilde K^1_t=K^1_{t\wedge\alpha},\quad \tilde
M^1_t=M^1_{t\wedge\alpha}+N^\alpha_t,\quad
V^1_t=(\xi^{1}-H_\alpha)\mathbf{1}_{[\alpha,\beta]}(t)\mathbf{1}_{\{\alpha<\infty\}},
\quad t\in [0,\beta],
\]
and
\[
\tilde f(t,y):= f(t,y) \fch_{[0,\alpha]}(t),\quad  \tilde L^1_t=L^1_t\fch_{[0,\alpha)}(t)+H_t\fch_{[\alpha,\beta]}(t)\fch_{\{\alpha<\infty\}}. \quad t\in
[0,\beta], \,y\in\BR.
\]
Then $(\tilde Y^1,\tilde M^1,\tilde K^1)$ is a solution of
RBSDE$^\beta(\xi^1\fch_{\{\alpha=\infty\}}+H_\beta \mathbf{1}_{\{\alpha<\infty\}},\tilde
f+dV^1+dC^\alpha,\tilde L^1)$. Hence, by Proposition \ref{prop3.1},
\begin{align*}
\|\tilde Y^1-Y^2\|_{1;\beta}&\le
E|\xi^1\fch_{\{\alpha=\infty\}}+H_\beta\fch_{\{\alpha<\infty\}}-\xi^2|
+E\int_0^\beta\,d|V^1|_t+E\int_0^\beta\,d|C^\alpha|_t\\
&\quad+E\int_0^\beta|\tilde
f(t,\tilde Y^1_t)-f(t,\tilde Y^1_t)|\,dt+\|\tilde L^1-L^2\|_{1;\beta},
\end{align*}
which leads to (\ref{eq2.0.1}).
\end{proof}

\begin{remark}
(i) The basic difference between assertions (i) and (ii) is that unlike  (i), the right-hand side of the estimate in (ii) does not depend on the solution. This allows one to provide some results  on the rate of convergence.

(ii) At first glance the presence of a process $H$ on the right-hand  side of the inequality in (ii) is puzzling. We shall see later on that in order to get the rate of convergence in \eqref{eq1.1} it is necessary
to use (ii) with a suitable process $H$ depending on the model. In most   cases $H$ cannot be taken to be zero.
\end{remark}

\section{RBSDEs and value functions of stopping problems}
\label{sec4}

In what follows,
$\BM=(X=(X_t)_{t\ge0},(\theta_t)_{t\ge0},\BF=(\FF_t)_{t\ge0},(P_x)_{x\in
E})$ is a Borel right Markov process with state space
$E$ (augmented by a cemetery state $\partial$), shift operators
$(\theta_t)_{t\ge0}$ and life time $\zeta$, defined on some
measurable space $(\Omega,\FF)$ (see, e.g., \cite{Sharpe}).  We shall use the symbol $\BE_x$ to denote
the expectation with respect to $P_x$.
We adopt the convention that $X_\infty=\partial$.
We also adopt the convention that every function $g$ on $E$  (resp.
$E\times\BR$) is extended to $E\cup\{\partial\}$ (resp.
$(E\cup\{\partial\})\times\BR$) by setting $g(\partial)=0$ (resp.
$g(\partial,y)=0$, $y\in\BR$).

Let $D$ be an open subset of $E$ and $(P^D_t)_{t>0}$  be  the
semigroup  associated with the process $\BM$ killed when exiting  $D$, that is
\[
P^D_t\rho(x)=\mathbb E_x(\rho(X_{t})\mathbf{1}_{\{t<\tau_D\}}),
\quad t\ge0,\quad x\in D,
\]
and let $R^D$ denote the potential operator, that is
\[
R^D\rho(x)=\mathbb E_x\int^{\tau_D}_0\rho(X_t)\,dt,\quad x\in D
\]
for any positive Borel $\rho:E\rightarrow\BR$.
We assume that $(P^D_t)$ is transient, i.e. there exists a strictly positive function $\bar\rho$ such that $R^D\bar\rho$ is finite.
By the  strong Markov
property, for all  positive Borel functions
$\rho:E\rightarrow\BR$ and  $t>0$ we have
\begin{equation}
\label{eq4.6}
\mathbb E_x\int^{\tau_D}_{t\wedge\tau_D}\rho(X_s)\,ds
=P^D_tR^D\rho(x), \quad x\in D.
\end{equation}

For given  Borel measurable  functions  $\varphi:E\rightarrow\BR$, $\psi: D^c\to \BR$ and $g:E\times\BR\rightarrow\BR$, $h:E\to\BR$ we set
\[
\xi^{T,s}=\varphi(X_{T_s})\fch_{\{\tau_D> T_s\}}+\psi(X_{\tau_D})\fch_{\{\tau_D\le T_s\}}, \quad
\xi=\psi(X_{\tau_D})
\]
and
\[
f(\omega,t,y)=g(X_t(\omega),y),\quad t\ge0,\,y\in\BR,
\qquad  L_t=h(X_t),\quad t\ge0.
\]

We will need the following assumptions:
\begin{enumerate}
\item[(H)] For any $x\in D$ the function $y\mapsto g(x,y)$ is continuous and nonincreasing
\end{enumerate}
and, in the stationary case,
\begin{enumerate}
\item[(S1)]
$\mathbb E_x|\psi(X_{\tau_D})|+\mathbb E_x\int_0^{\tau_D}|g(X_t,0)|\,dt<\infty$,

\item[(S2)] For every $y\in\BR$, $\int^{\tau_D}_0|g(X_t,y)|\,dt<\infty$
$P_x$-a.s.

\item[(S3)]$L$ is a c\`adl\`ag process of class (D) under 
$P_x$ such that
$\lim_{a\rightarrow\infty}h(X_{a\wedge\tau_D})= \psi(X_{\tau_D})$ $P_x$-a.s.

\end{enumerate}

The counterparts to (S1)--(S3) in the  evolutionary case are as follows:
\begin{enumerate}
\item[(E1)]
$\mathbb E_x|\xi^{T,s}|+\mathbb E_x\int_0^{T_s\wedge \tau_D}|g(X_t,0)|\,dt<\infty$,

\item[(E2)] For every $y\in\BR$, $\int^{T_s\wedge \tau_D}_0|g(X_t,y)|\,dt<\infty$
$P_x$-a.s.,

\item[(E3)]$L$ is a c\`adl\`ag  process on $[0,T_s\wedge\tau_D]$
of class (D) under the measure $P_x$ and  $h(X_{T_s\wedge\tau_D})\le\xi^{T,s}$
$P_x$-a.s.
\end{enumerate}

If (H) and  (S1)--(S3) are satisfied, then by Theorem \ref{th2.5} there exists a unique solution $(Y^x,M^x,K^x)$ of RBSDE$^{\tau_D}(\xi,f,L)$
under the measure $P_x$. Moreover, from  Theorem \ref{th2.5} we conclude that
if (H) and (E1)--(E3) are satisfied, then  there exists a unique solution $(Y^{T,s,x},M^{T,s,x},K^{T,s,x})$ of RBSDE$^{T_s\wedge \tau_D}(\xi^{T,s},f,L)$. We are going to show that
\begin{equation}
\label{eq4.10}
V(X)=Y^x,\qquad V_T(s+\cdot,X)=Y^{T,s,x}\quad P_x\mbox{-}a.s.,
\end{equation}
where $V_T$ is defined by (\ref{eq1.01}) and $V$ is defined by (\ref{eq1.02}). This together with the stability results for  RBSDEs proved in Theorem \ref{th4.3}
yields the main result of the paper.
First, however, we shall prove a weaker result.

\begin{proposition}
\label{prop4.5}
Let $x\in D$ and $s\ge 0$.  Assume that  \mbox{\rm(H)} and \mbox{\rm(S1)--(S3)} are satisfied,  and for every $T>0$,  \mbox{\rm(E3)}  is satisfied  and
$\BE_x|\varphi(X_T)|<\infty$.
\begin{enumerate}[\rm(i)]
\item If
\begin{equation}
\label{eq4.2}
\lim_{a\rightarrow\infty}\BE_x\big(|\varphi(X_a)|\mathbf1_{\{a<\tau_D\}}\big)=0,
\end{equation}
then $\lim_{T_s\rightarrow\infty}V_T(s,x)=V(x)$.

\item Let $\gamma$ be a Borel function such that $\gamma(X)$
is a difference of two supermartingales of class \mbox{\rm(D)} on $[0,\tau_D]$ and  $\gamma(X_{\tau_D})=\psi(X_{\tau_D})$. Then  for any $0\le s\le T$,
\begin{align}
\label{eq4.7.3}
\nonumber |V_T(s,x)-V(x)|&\le \BE_x\big(|\gamma-\varphi|(X_{T_s})\fch_{\{\tau_D> T_s\}}\big)
+ \BE_x\int_{T_s\wedge\tau_D}^{\tau_D}|g(X_t,\gamma(X_t))|\,dt\\
&\quad+\BE_x\int_{T_s\wedge\tau_D}^{\tau_D}\,d|C^x|_t+
\sup_{T_{s}\wedge \tau_D\le\tau\le \tau_D}\BE_x|\gamma(X_{\tau})-h(X_{\tau})|,
\end{align}
where $C^x$ is the predictable finite variation part of the Doob-Meyer decomposition of $\gamma(X)$ with $C^x_0=0$.
\end{enumerate}
\end{proposition}
\begin{proof}
By Theorem \ref{th3.3}, $V(x)=\mathbb E_xY^x_0$ and $V_T(s,x)=\mathbb E_x Y^{T,s,x}_0$.
Therefore, by Theorem \ref{th4.3}(i) applied to $Y^1=Y^{T,s,x}$, $Y^2=Y^x$ and $\alpha=T_s\wedge\tau_D$, $\beta=\tau_D$, we have
\begin{align*}
|V_T(s,x)-V(x)|&\le\BE_x|\xi^{T,s}-Y^x_{T_s\wedge\tau_D}|\\
&=\BE_x|\varphi(X_{T_s})\fch_{\{\tau_D>T_s\}}+\psi(X_{\tau_D})\fch_{\{\tau_D\le T_s\}}
- Y^x_{T_s\wedge\tau_D}|.
\end{align*}
Since $Y^x$ is of class (D) under the measure $P_x$, and $Y^x_{T_s\wedge\tau_D}\rightarrow\xi=\psi(X_{\tau_D})=\psi(X_{\tau_D})\mathbf1_{\{\tau_D<\infty\}}$ $P_x$-a.s. as $T_s\rightarrow\infty$,
then $\mathbb E_x|\psi(X_{\tau_D})\fch_{\{\tau_D\le T_s\}}
- Y^x_{T_s\wedge\tau_D}|\to 0$ as $T_s\to \infty$. From this and \eqref{eq4.2} we obtain at once that the  right-hand side of the above inequality converges to zero as $T_s\rightarrow\infty$.
This proves (i). Part (ii) follows from Theorem \ref{th4.3}(ii) with $H=\gamma(X)$ and
$Y^1,Y^2$, $\alpha,\beta$ as above.
\end{proof}

Recall  that $D$ is called Dirichlet regular if
$P_x(\tau_D>0)=0$ for all $x\in \partial D$.

\begin{lemma}
\label{lem4.2} Let $\gamma$ be a positive Borel function on $E$ and  $v(x)=\mathbb E_x\gamma(X_{\tau_D})$, $x\in E$.
Then for every  $\alpha\in\TT^{\tau_D}$,
\[
\fch_{\{\alpha<\tau_D\}}v(X_\alpha)=\fch_{\{\alpha<\tau_D\}}
\BE_x( \gamma(X_{\tau_D})|{\FF_\alpha}),\quad P_x\mbox{-a.s.},\quad
x\in E.
\]
Moreover, if $D$ is Dirichlet regular, then for every
$\alpha\in\TT^{\tau_D}$ we have
\[
v(X_\alpha)=\mathbb E_x(\gamma(X_{\tau_D})|{\FF_\alpha}),
\quad P_x\mbox{-a.s.},\quad x\in E.
\]
\end{lemma}
\begin{proof}
Let $A=\{\alpha<\tau_D\}$, $B=\{\alpha=\tau_D\}$. All the
following equations hold $P_x$-a.s. for $x\in E$. By the strong
Markov property,
\begin{align}
\label{eq4.3}
v(X_\alpha)=E_{X_\alpha}\gamma(X_{\tau_D})
&=\BE_x(\gamma(X_{\tau_D}\circ\theta_\alpha)|{\FF_\alpha})\nonumber\\
&=\BE_x(\fch_A\gamma(X_{\tau_D}\circ\theta_\alpha)|{\FF_\alpha})
+\BE_x(\fch_B\gamma(X_{\tau_D}\circ\theta_\alpha)|{\FF_\alpha}).
\end{align}
On the set $A$ we have $\tau_D\circ
\theta_\alpha=\tau_D-\theta_\alpha$, so
$\fch_A\gamma(X_{\tau_D}\circ\theta_\alpha)=\fch_A\gamma(X_{\tau_D})$.
Therefore (\ref{eq4.3}) implies the first assertion. To prove the
second one, it suffices now to observe  that by the Dirichlet
regularity of $D$,
$\fch_B\gamma(X_{\tau_D}\circ\theta_\alpha)=\fch_B\gamma(X_{\tau_D})$.
\end{proof}

\begin{corollary}
\label{wn6.rep}
Let assumption  \mbox{\rm(H)} hold  and  \mbox{\rm(S1)--(S2)}
be satisfied  for every $x\in D$. Then for every $x\in D$
there exists a unique solution $(Y^x,M^x)$ of {\rm BSDE}$^{\tau_D} (\xi,f)$. Furthermore,
the function $u(x):=\mathbb E_xY^x_0$ is  Borel measurable, and
\begin{equation}
\label{eq6.strt12}
Y^x_t=u(X_t),\quad t<\tau_D,\quad x\in D.
\end{equation}
If $D$ is Dirichlet regular, then the above equation holds for all
$t\le\tau_D$ and $x\in D$.
\end{corollary}
\begin{proof}
Let $v$ be as  in Lemma \ref{lem4.2}. By  Lemma \ref{lem4.2}, and
a simple calculation, we have $Y^x=\bar Y^x+v(X)$, where $(\bar
Y^x,\bar M^x)$ is a solution of BSDE$^{\tau_D}(0,f_v)$ with
\[
f_v(t,y)=f(t,y+v(X_t)).
\]
By \cite[Theorem 4.7]{KR:JFA}, there exists a Borel function $\bar
u$  such that $\bar Y^x= \bar u(X)$, $x\in E$. Thus, we have
(\ref{eq6.strt12}) with $u=\bar u+v$.
\end{proof}

In the next theorem, we give a precise meaning of (\ref{eq4.10}) and give
conditions ensuring that it is satisfied.

\begin{theorem}
\label{th4.4}
Let assumption  \mbox{\rm (H)} hold.
\begin{enumerate}[\rm(i)]
\item If \mbox{\rm(S1)--(S3)} are satisfied for all $x\in D$, then
 $V(X_t)=Y^x_t$, $t\in [0,\tau_D)$, $P_x$-a.s. for $x\in D$.

\item If \mbox{\rm(E1)--(E3)} are satisfied for all  $x\in D$ and $s\in [0,T)$, then $V_T(s+t,X_{t})=Y^{T,s,x}_t$, $t\in [0,T_s\wedge\tau_D)$,
$P_x$-a.s. for $(s,x)\in [0,T)\times D$.
\end{enumerate}
Moreover, if $D$ is Dirichlet regular, then the assertions of \mbox{\rm (i)}
and \mbox{\rm(ii)} hold on the random intervals $[0,\tau_D]$ and $[0,T_s\wedge\tau_D]$, respectively.
\end{theorem}
\begin{proof}
(i)  Let $f_n(t,y)=f(t,y)+n\rho(X_t)(y-L_t)^-$, $n\ge1$ with $\rho$ being a strictly positive bounded Borel function
such that $R^D\rho$ is bounded (it exists since we assumed that $(P^D_t)$ is transient).
By Theorem \ref{th2.5}, for every $x\in D$,
\[
Y^{n,x}_t\nearrow Y^x_t,\quad t\in [0,\tau_D],\quad P_x\mbox{-a.s.,}
\]
where $(Y^{n,x},M^{n,x},K^{n,x})$ is the unique solution of BSDE$^{\tau_D}(\xi,f_n)$ under the measure $P_x$. By Corollary \ref{wn6.rep}, there exists a Borel function $u_n$
such that $u_n(X_t)=Y^{n,x}_t$, $t\in [0,\tau_D)$ (for $t\in [0,\tau_{D}]$ in case $D$ is Dirichlet regular) $P_x$-a.s.  Clearly $u_n(x)=E_xY^{n,x}_0\nearrow E_xY^x_0=:u(x),\, x\in D$. Thus $u(X_t)=Y^{x}_t$, $t\in [0,\tau_D)$ (for $t\in [0,\tau_{D}]$ in case $D$ is Dirichlet regular) $P_x$-a.s.  By Theorem \ref{th3.3}, $u=V$ on $D$.

(ii) Let  $\upsilon$ be the uniform motion to the right, that is $\upsilon(0)=s$ and $\upsilon(s)=s+t$ , $t\ge0$, under the measure $P_{s,x}$. Set $\mathscr
X_t=(\upsilon(t),X_{\upsilon(t)})$. Then $\{(\mathscr X,P_{s,x}),
(s,x)\in\BR_+\times D\}$, where $P_{s,x}(\mathscr X_t\in
A)=P_{x}((t+s,X_t)\in A)$ for  any Borel subset of $\BR_+\times
D$, is a Markov process with state space  $\BR_+\times D$ (see,
e.g., \cite[Section 8.5.5]{Wentzell}). Set $\hat D=[0,T)\times D$,
$\tau_{\hat D}=\inf\{t>0: \mathscr X_t\notin \hat D\}$ and
$\hat\xi^T=\hat\psi(\mathscr X_{\tau_{\hat D}})$, where
\[
\hat\psi(t,x)=\varphi(x)\fch_{\{t=T,x\in D\}}+\psi(x)\fch_{\{t<T,x\notin D\}}.
\]
We also set $\hat f(t,y)=g(\Pi(\mathscr X_t),y)$, $\hat L_t=h(\Pi(\mathscr X_t))$, where $\Pi$
denotes the  canonical projection on $E$. By Theorem \ref{th2.5}, for every $(s,x)\in \hat D$ there exists a unique solution
$(\hat Y^{s,x},\hat M^{s,x},\hat K^{s,x})$ of RBSDE$^{\tau_{\hat D}}(\hat \xi^T,\hat f,\hat L)$ under $P_{s,x}$. Moreover,
\[
\hat Y^{n,s,x}_t\nearrow \hat Y^{s,x}_t,\quad t\in [0,\tau_{\hat D}],
\]
where $(\hat Y^{n,s,x},\hat M^{n,s,x})$ is the unique solution to BSDE$^{\tau_{\hat D}}(\hat\xi^T,\hat f_n)$
with $\hat f_n(t,y)=\hat f(t,y)+n(y-\hat L_t)^-$.  By Corollary \ref{wn6.rep}, there exists a Borel function $u_n$ on $\hat D$
such that $u_n(\mathscr X_t)=\hat Y^{n,s,x}_t$, $t\in [0,\tau_{\hat D})$, $P_{s,x}$-a.s. (for $t\in [0,\tau_{\hat D}]$ in case $D$ is Dirichlet regular). Clearly $u_n(s,x)=\BE_{s,x}\hat Y^{n,s,x}_0$.
However, by the relation between $P_x$ and $P_{s,x}$, we have
\begin{align*}
\mathbb E_xu_n(s,X_0)=u_n(s,x)=\mathbb E_{s,x}\hat Y^{n,s,x}_0& =\mathbb E_x\big(\varphi(X_{T-s})\fch_{\{T_s\wedge\tau_D\}}
+\psi(X_{\tau_D})\fch_{\{\tau_D< T_s\}}\big)\\
&\quad+\BE_x\int_0^{T_s\wedge\tau_D}g(X_t,u_n(s+t,X_t))\,dt\\
&\quad +n\BE_x\int_0^{T_s\wedge\tau_D}(u_n(s+t,X_t)-h(X_t))^-\,dt.
\end{align*}
Set $f_n(t,y)=g(X_t,y)+n(y-h(X_t))^-$.  Using the strong  Markov property of $X$ we deduce that
$u_n(s+t,X_t)=Y^{n,T,s,x}_t,\, t\in [0,T_s\wedge\tau_D)$ (for $t\in [0,T_s\wedge\tau_D]$ in case of Dirichlet regular $D$), where  $(Y^{n,T,s,x},M^{n,T,s,x})$ is the unique solution of
BSDE$^{T_s\wedge\tau_D}(\xi^{T,s},f_n)$. By Theorem \ref{th2.5}, $Y^{n,T,s,x}\nearrow Y^{T,s,x}$ $P_x$-a.s.
In particular,
$u_n(s,x)=\BE_{x}Y^{n,T,s,x}_0\nearrow\BE_xY^{T,s,x}_0=: u(s,x)$.
Therefore $u(s+t,X_t)=Y^{T,s,x}_t$, $t\in [0,T_s\wedge\tau_D)$ (for $t\in [0,T_s\wedge\tau_D]$ in case of Dirichlet regular $D$), $P_x$-a.s. On the other hand, by
Theorem \ref{th3.3}, $u=V_T$ on $\hat D$.
\end{proof}

\begin{remark}
Let  (H) hold  and  (S1)--(S3) be satisfied for every $x\in E$. Let   $Y^x$ be the first component of the solution of RBSDE$^{\tau_D}(\xi,f,L)$. By Theorem \ref{th4.4}(i), $Y^x=V(X)$, so from  \cite[Remark 3.6]{K:SPA2} it follows that for any stoppong time $\alpha\le\tau_D$,
\begin{align*}
V(X_{\alpha})&=\esssup_{\sigma\ge\alpha}\mathbb E_x\Big(\int^{\sigma\wedge\tau_D}_{\alpha}g(X_r,V(X_r))\,dr \\
&\qquad\qquad+h(X_{\sigma})\fch_{\{\sigma<\tau_D\}}
+\psi(X_{\tau_D})\fch_{\{\sigma\wedge\tau_D=\tau_D\}}\,\big|\,\FF_{\alpha}\Big).
\end{align*}
Letting $\alpha=0$ we get (\ref{eq1.3}). Similarly, if (H), (S1)--(S3) are satisfied, then from Theorem \ref{th4.4}(ii) we get (\ref{eq1.2}).  Note also that from \cite[Proposition 4.3, Proposition 5.1]{KR:EJP}
it follows that $V:D\rightarrow\BR$ satisfying (\ref{eq1.3}) is unique and $V_T:[0,T]\times D\rightarrow\BR$ satisfying (\ref{eq1.2}) is unique.
\end{remark}

From Theorem  \ref{th4.3} and Theorem \ref{th4.4} we get  the
dynamic version of (\ref{eq4.7.3}). It is one of the main results
of our paper.
\begin{theorem}
\label{th4.5}
 Assume that   \mbox{\rm(H)} hold and for every  $x\in D$   assumptions \mbox{\rm(S1)--(S3)} are satisfied  and $\BE_x|\varphi(X_T)|<\infty$, $T>0$.
Moreover, assume that  \mbox{\rm(E3)} is satisfied for every $(s,x)\in [0,T)\times D$.
\begin{enumerate}
\item[\rm(i)]
If \eqref{eq4.2} is satisfied, then
\[
\lim_{T_s\rightarrow\infty}\sup_{\alpha\in \TT^{ T_s\wedge \tau_D} }\mathbb E_x\big[\mathbf1_{\{\alpha<T_s\wedge\tau_D\}}|V_T(s+\alpha,X_\alpha)-V(X_\alpha)|\big]=0.
\]
If $D$ is Dirichlet regular, then the above convergence holds true without the factor  $\fch_{\{\alpha<T_s\wedge\tau_D\}}$.

\item[\rm(ii)]Let $\gamma$, $C^x$ be as in Proposition \ref{prop4.5}. For every $(s,x)\in [0,T)\times D$,
\begin{align}
\label{eq4.7}
&\sup_{\alpha\in \TT^{ T_s\wedge \tau_D} }\mathbb E_x\big[\mathbf1_{\{\alpha<T_s\wedge\tau_D\}}|V_T(s+\alpha,X_\alpha)-V(X_\alpha)|\big]
\nonumber\\
&\qquad\le \BE_x| \gamma-\varphi|(X_{T_s})\fch_{\{\tau_D>T_s\}}
+ \BE_x\int_{T_s\wedge\tau_D}^{\tau_D}|g(X_t,\gamma(X_t))|\,dt\nonumber\\
&\qquad\quad+\BE_x\int_{T_s\wedge\tau_D}^{\tau_D}\,d|C^x|_t+
\sup_{\tau\in \TT_{T_{s}\wedge \tau_D}^{\tau_D}}\BE_x|\gamma(X_{\tau})-h(X_{\tau})|.
\end{align}
If $D$ is Dirichlet regular, then the above estimate holds true without the factor  $\fch_{\{\alpha<T_s\wedge\tau_D\}}$ on the left-hand side of \mbox{\rm(\ref{eq4.7})}.
\end{enumerate}
\end{theorem}
\begin{proof}
Follows from  Theorem \ref{th4.4} and  Theorem \ref{th4.3}  applied to
$Y^1=Y^{T,s,x}, Y^2=Y^x$, $H=\gamma(X)$ and $\alpha=T_s\wedge\tau_D$, $\beta=\tau_D$ (see the proof of Proposition \ref{prop4.5}).
\end{proof}

\begin{remark}
\label{rem4.6} It is well known (see, e.g., \cite[Remark
2.1]{KRS:BSM}) that for every $q\in(0,1)$,
\begin{align*}
& \mathbb E_x\sup_{t\le T_s\wedge\tau_D} |V_T(s+t,X_t)-V(X_t)|^q\nonumber \\
&\qquad\le\frac{1}{1-q}\Big(\sup_{\alpha\in \TT^{ T_s\wedge \tau_D}}\mathbb E_x|V_T(s+\alpha,X_\alpha)-V(X_\alpha)|\Big)^q.
\end{align*}
This together with (\ref{eq4.7}) yields the rate of convergence of
the value function in the supremum norm.
\end{remark}

\section{Rate of convergence}
\label{sec5}

Let 
\[
V^*_{T_s}(x)=\sup_{\alpha\in \TT^{ T_s\wedge
\tau_D}}\BE_x|V_T(s+\alpha,X_\alpha)-V(X_\alpha)|.
\]
The aim of this section is to provide the rate of convergence of
$V^*_{T_s}$ as $T_s\rightarrow\infty$. To this end, we shall
estimate the right-hand side of (\ref{eq4.7}). We begin with some
general remarks. Then we discuss in more detail some specific
situations.

Throughout this section, we  assume that (S1)--(S3) and  (E1)--(E3)
are satisfied for all $x\in E$ and $s\in [0,T],\, T\ge 0$.
Moreover, we  assume that $D$ is Dirichlet regular and $\gamma$
is of the form
\[
\gamma(x)=\mathbb E_x\psi(X_{\tau_D}),\quad x\in D.
\]
Under the measure $P_x$ the process $\gamma(X)$ is a martingale on
$[0,\tau_D]$ (see the argument in the proof of Lemma
\ref{lem4.2}), so $C^x=0$. Therefore, by
Theorem \ref{th4.5},
\begin{align*}
V^*_{T_s}(x)&\le \BE_x| \gamma-\varphi|(X_{T_s})
|\fch_{\{\tau_D\ge T_s\}}
+ \BE_x\int_{T_s\wedge\tau_D}^{\tau_D}|g(X_t,\gamma(X_t))|\,dt\nonumber\\
&\quad+
\sup_{\tau\in \TT_{T_{s}\wedge \tau_D}^{\tau_D}}\BE_x|\gamma(X_{\tau})-h(X_{\tau})|.
\end{align*}
This together with (\ref{eq4.6}) gives
\begin{equation}
\label{eq55.1} V^*_{T_s}(x)\le P^D_{T_s}| \gamma-\varphi|(x)
+P^D_{T_s}R^D(|g(\cdot,\gamma)|)(x)
+\sup_{\tau\in
\TT_{T_{s}\wedge \tau_D}^{\tau_D}}
\BE_x|\gamma(X_{\tau})-h(X_{\tau})|.
\end{equation}
Define $w:D\rightarrow\BR$ by
\begin{equation}
\label{eq.dfofw}
w(x):=|\gamma-\varphi|(x)+R^D(|g(\cdot,\gamma)|)(x)
\end{equation}
and $\hat h:E\times \TT\to\BR$ by
\begin{equation}
\label{eq.dfofhh}
\hat h(x,\tau):=\mathbb E_x|\gamma(X_\tau)-h(X_\tau)|=\BE_x|\mathbb E_x(h(X_{\tau_D})|\FF_\tau)-h(X_\tau)|.
\end{equation}
With this notation (\ref{eq55.1}) can be rewritten in the form
\begin{align}
\label{eq55.22} V^*_{T_s}(x)\le
P^D_{T_s}w(x)+\sup_{\tau\in\TT^{\tau_D}_{\tau_D\wedge T_s}}\hat
h(x,\tau).
\end{align}
The rate of convergence of the first  term on the right-hand side
of \eqref{eq55.22} depends on the rate of decay of the semigroup
$(P^D_t)$ as $t\to \infty$. There are  various results in the
literature concerning this issue and we shall indicate a few of
them. However,  it is by no means  clear how to control the rate
of decay of the second term on the right-hand side of
\eqref{eq55.22}.

\subsection{General Markov processes}
\label{sec5.1}

Set
\[
\hat h_\infty(x):=\sup_{\tau\in\TT^{\tau_D}} \hat h(x,\tau), \quad
x\in D.
\]

\begin{theorem}
\label{th5.1}
For all $0\le s\le T$ and $x\in D$,
\begin{equation}
\label{eq5.15}
V^*_{T_s}(x)\le P^D_{T_s}w(x)+P^D_{T_s}\hat h_\infty(x).
\end{equation}
\end{theorem}
\begin{proof}
Let $Y^x$ be the first component of the solution of RBSDE$^{\tau_D}(0,0,|\gamma-h|(X))$
under the measure $P^D_x$.
By Theorem \ref{th4.4} and Theorem \ref{th3.3},
\begin{equation}
\label{eq5.5}
\esssup_{\tau\in \TT^{\tau_D}_{T_s\wedge\tau_D}}
\BE^D_x\big(|\gamma(X_\tau)-h(X_\tau)|\big|\FF_{T_s\wedge\tau_D}\big)=Y^x_{T_s\wedge\tau_D}
=u(X_{T_s\wedge\tau_D})\quad P^D_x\mbox{-a.s.},
\end{equation}
where $\BE^D_x$ denotes the expectation with respect to $P^D_x$ and
\begin{equation}
\label{eq5.6}
u(x)=\sup_{\tau\in \TT^{\tau_D}}\mathbb E^D_x|\gamma(X_\tau)-h(X_\tau)|,\quad x\in D.
\end{equation}
Taking the expectation of both sides of \eqref{eq5.5} we get
\begin{align*}
\sup_{\tau\in \TT^{\tau_D}_{T_s\wedge\tau_D}}\mathbb E^D_x|\gamma(X_\tau)-h(X_\tau)|&=\mathbb E^D_x\esssup_{\tau\in \TT^{\tau_D}_{T_s\wedge\tau_D}}\mathbb E^D_x\big(|\gamma(X_\tau)-h(X_\tau)|\big|\FF_{T_s\wedge\tau_D}\big)\\
&=\mathbb E^D_xu(X_{T_s\wedge\tau_D})=P^D_{T_s}u(x).
\end{align*}
On the other hand, by \eqref{eq5.6}, $u(x)=\sup_{\tau\in\TT^{\tau_D}}\hat h(x,\tau)=\hat h_\infty(x)$. This together with (\ref{eq55.22})  proves (\ref{eq5.15}).
\end{proof}

By Theorem \ref{th5.1} the problem of the rate of convergence of
$V^*_{T_s}$ as $T_s\to\infty$ is reduced to the problem of the rate
of decay of the semigroup $(P^D_t)$ as $t\to \infty$. Some simple
consequences of this fact are given below.

\begin{corollary}
\label{cor5.1} Assume that $w$ and $|h-\gamma|$ are bounded. Then
for all $0\le s\le T$ and $x\in D$,
\[
V^*_{T_s}(x)\le (\|w\|_\infty+\|h-\gamma\|_\infty)P^D_{T_s}1(x)
=(\|w\|_\infty+\|h-\gamma\|_\infty)P_x(\tau_D\ge T_s).
\]
\end{corollary}
\begin{proof}
Follows immediately form Theorem \ref{th5.1} since $\|\hat
h_{\infty}\|_{\infty}\le\|\gamma-h\|_{\infty}$.
\end{proof}

In applications very often it is known that $h(X)$ is a
supermartingale or submartingale. In such a case the following
lemma is useful.

\begin{lemma}
\label{lem5.1} Assume that $x\in D$ and  $h(X)$ is a submartingale (or
supermartingale) under the measure $P_x$. Then
\[
\hat h_\infty(x)= |\gamma-h|(x).
\]
\end{lemma}
\begin{proof}
Clearly $\hat h_\infty(x)\ge |\gamma-h|(x),\, x\in D$.  By the
assumption that $h(X)$ is a submartingale, (S3) and the definition
of $\gamma$, for $\tau\le\tau_D$ we have
\begin{align*}
\gamma(X_\tau)=\BE_x(\psi(X_{\tau_D})|\FF_\tau)=\BE_x
(h(X_{\tau_D})|\FF_\tau)\ge h(X_\tau)\quad P_x\mbox{-a.s}.
\end{align*}
From this and the fact that  $\gamma(X)$ is a martingale, we  infer
that for every $\tau\in\TT^{\tau_D}$,
\begin{equation*}
\BE_x|\gamma(X_\tau)-h(X_\tau)|=\BE_x\gamma(X_\tau)-\BE_xh(X_\tau)\le
\BE_x\gamma(X_0)- \mathbb E_x h(X_0)=|\gamma-h|(x).
\end{equation*}
An analogous reasoning applies to the case where  $h(X)$ is  a
supermartingale.
\end{proof}

\begin{corollary}
\label{cor5.2} Assume that $h(X)$ is a submartingale (or
supermartingale) under  the measure $P_x$. Then for all $0\le s\le
T$ and $x\in D$,
\[
V^*_{T_s}(x)\le P^D_{T_s}(w+|\gamma-h|)(x).
\]
\end{corollary}
\begin{proof}
Follows immediately from Theorem \ref{th5.1} and Lemma
\ref{lem5.1}.
\end{proof}

Let $m$ be a positive Borel measure on $E$ with full support.  In
the rest of this subsection, we assume that $\BM^D$ has the
transition density $p_D(t,x,y)$ with respect to $m$, that is
\[
P^D_tf(x)=\int_D f(y) p_D(t,x,y)\,m(dy),\quad x\in D,\, f\in\BB_b(D).
\]
For $q\in [1,\infty]$, we let
\begin{equation}
\label{eq5.ffrvi1}
r_q(t,x):=\|p_D(t,x,\cdot)\|_{L^q(D;m)},\quad x\in D,\, t>0.
\end{equation}
Note that $r_1(t,x)=P^D_t1(x)$. For $q\in [1,\infty]$ we let
$q^*=\frac{q}{q-1}$ if $q\in (1,\infty)$, $q^*=\infty$ if $q=1$,
and $q^*=1$ if $q=\infty$.

\begin{proposition}
\label{prop5.5} For all $0\le s\le T$, $q\in [1,\infty]$ and $x\in
D$,
\[
V^*_{T_s}(x)\le r_{q^*}(T_s,x)(\|w\|_{L^q(D;m)}+\|\hat h_\infty\|_{L^q(D;m)}).
\]
\end{proposition}
\begin{proof}
Follows  easily from Theorem \ref{th5.1} by applying H\"older's
inequality.
\end{proof}

\begin{corollary}
\label{cor5.3}
Assume that $h(X)$ is a submartingale (or supermartingale) under the measure $P_x$.
Then for all $0\le s\le T$, $q\in [1,\infty]$ and $x\in D$,
\[
V^*_{T_s}(x)\le  r_{q^*}(T_s,x)(\|w\|_{L^q(D;m)}+\|h-\gamma\|_{L^q(D;m)}) .
\]
\end{corollary}
\begin{proof}
Follows from Proposition \ref{prop5.5} and Lemma \ref{lem5.1}.
\end{proof}

\subsection{L\'evy-type operators}
\label{sec5.2}

Let $L$ be an integro-differential operator defined for $u\in
C^2(\BR^d)\cap C_b(\BR^d)$ by
\begin{align}
\label{eq.opa} \nonumber Lu(x)&=\mbox{Tr}(Q(x)\nabla^2u(x))
+b(x)\cdot\nabla u(x)-c(x)u(x)\\
&\quad+\int_{\BR^d} \left(u(x+y)-u(x)-y\cdot\nabla u(x)
 \fch_{\{|y|\le 1\}}\right)N(x,dy).
\end{align}
We assume that its coefficients $q_{ij}$, $b_i$, $c$,
$i,j=1,\dots,d$ are bounded Borel measurable functions on $\BR^d$,
$c$ is nonnegative and the matrix $ Q(x)=[q_{ij}(x)]_{i,j=1}^{d}$
is symmetric and positive definite for every $x\in \BR^d$. As
for $N(x,dy)$, we assume that it is a {\em L\'evy kernel}, that is
$N(x,dy)$ is a $\sigma$-finite positive  Borel measure on
$\BR^d\setminus\{0\}$ for each $x\in \BR^d$, and
\[
\sup_{x\in\BR^d}\int_{\BR^d\setminus\{0\}}(1\wedge|y|^2)\,N(x,dy)<\infty,\quad x\in\BR^d.
\]

Let  $\mu$  be  a  probability measure on $\BR^d$. Recall that a probability measure $P_{\mu}$ on the Skorokhod space ${\cal D}$ of c\`adl\`ag functions on $[0,\infty)$ is called a solution  of the martingale problem, in the sense of Stroock and Varadhan, associated with  the operator $L$ and initial measure $\mu$, if for every $f\in C^2_b(\BR^d)$,
\begin{equation}
\label{Mtf}
M^f_t:= f(X_t)-f(X_0)-\int_0^tLf(X_r)\,dr,\quad t\ge 0,
\end{equation}
is a  martingale under the measure $P_{\mu}$, and  $P_\mu(X_0\in B)=\mu(B)$, $B\in\mathcal B(\BR^d)$.
In what follows we assume that there exists a strong Markov process $\BM=\{(X,P_x),x\in \BR^d\}$
with the property that  for every probability measure $\mu$ on $\BR^d$ the measure $P_\mu(\cdot):=\int_{\BR^d}P_x(\cdot)\,\mu(dx)$ is a solution of the martingale problem associated with  the operator $L$ and initial measure $\mu$.
Any Markov process $\BM$ enjoying the above properties is called a strong Markov solution of the martingale problem associated with $L$.

By \cite[Theorem 4.1]{kuhn} (see also \cite[Section 6.1]{KK:B}) there exists  a strong
Markov solution of the martingale problem associated with the operator $L$ provided that for every probability measure $\mu$ on $\BR^d$
there exists a solution of the martingale problem for $L$ and initial measure $\mu$.

\begin{remark}
\label{rem5.7}
Consider the following  hypotheses:
\begin{itemize}
\item[(M1)]
The matrix
$Q(x)$  is {\em uniformly strictly positive
definite} on compact sets, i.e. for any compact set $K\subset \BR^d$
there exists $\lambda_K>0$ such that
\[
\lambda_K|\xi|^2\le \sum_{i,j=1}^dq_{ij}(x)\xi_i\xi_j,\quad x\in K,\, \xi=(\xi_1,\ldots,\xi_d)\in\BR^d.
\]
\item[(M2)] The mapping $\BR^d\ni x\mapsto Q(x)\in\BR^{d\times d}$ is continuous
and for every Borel set $B\subset B(0,1)=\{y\in\BR^d:|y|<1\}$ the mapping
\[
\BR^d\ni x\mapsto N_B(x):=\int_{B}\min\{|y|^2, 1\} \,N(x,dy)
\]
is continuous.
\end{itemize}
Let the assumptions on the coefficients of $L$ made after (\ref{eq.opa}) be satisfied.
Then there exists a strong Markov solution of the martingale problem associated with $L$ if
(M1) is satisfied (see \cite{AP,LM}) or (M2) is satisfied  and $ Q(x)$ is invertible for every $x\in\BR^d$ (see \cite[Theorem III.2.34, p. 159]{JS} and also \cite{St})
or (M2) is satisfied and the mapping $x\mapsto b(x)$ is continuous (see \cite{Hoh} and also \cite[Theorem 3.24]{BSW}).
\end{remark}

Recall here that a martingale problem is said to be {\em well posed} if for every probability measure $\mu$ on $\BR^d$
there exists a unique solution $P_\mu$ of the martingale problem associated with $L$ and initial measure $\mu$.

The following proposition can be useful  for estimating  $\hat
h_{\infty}$.

\begin{proposition}
\label{prop5.2}
Assume that $h,\gamma\in C^2_b(\BR^d)$. Then
\[
\hat h_\infty(x)\le R^D|Lh|(x)\le \big(\mathbb E_x\sup_{t\le\tau_D}|Lh(X_t)|^p\big)^{1/p}\big(\mathbb E_x(\tau_D)^{p^*}\big)^{1/p^*},\quad x\in D,\,p\in [1,\infty].
\]
\end{proposition}
\begin{proof}
By (\ref{Mtf}) applied to $h$ and $\gamma$,
\[
M^{\gamma}_t-M^h_t=(\gamma-h)(X_t)-(\gamma-h)(X_0)+\int_0^tLh(X_r)\,dr,\quad t\ge 0.
\]
By  (S3), for all $a\ge 0$ and $t\in[0,a]$ we have
\[
(\gamma-h)(X_t)=(\gamma-h)(X_{a\wedge\tau_D})
+\int_{t\wedge\tau_D}^{a\wedge \tau_D}Lh(X_r)\,dr
-\int_{t\wedge\tau_D}^{a\wedge \tau_D}\,d(M^{\gamma}_r-M^h_r).
\]
Thus the pair $((\gamma-h)(X),M^{\gamma}-M^h)$ is a solution of
BSDE$^{\tau_D}(0,F)$ with the coefficient $F(t,\omega):=
Lh(X_t(\omega))$, $t\ge 0$, $\omega\in\Omega$. Therefore applying
Proposition \ref{prop3.1} we get  the desired result.
\end{proof}

In the rest  of this subsection we assume additionally that
$\BM=\{(X,P_x),x\in\BR^d\}$ is a Feller process, i.e.
$P_t(C_{\infty}(\BR^d))\subset C_{\infty}(\BR^d)$, where $C_{\infty}(\BR^d)$ is the space of continuous functions on $\BR^d$ vanishing at infinity, with the symbol
\begin{align}
\label{eq5.symbf1}
p(x,\xi)&=c(x)-i\langle b(x),\xi\rangle+\frac12\langle Q
(x)\xi,\xi\rangle \nonumber \\
&\quad +\int_{\BR^d\setminus\{0\}}
\Big(1-e^{i\langle\xi,z\rangle}+i\langle\xi,z\rangle
\fch_{\{|z|\le 1\}}\Big)\,N(x,dz).
\end{align}

For an overview of  sufficient conditions on $p$ or the coefficients of $L$   guaranteeing that    $L$  generates a  Feller process $\mathbb M$ see \cite[Chapter 3]{BSW}. Here we recall one  general criterion (see \cite[Theorem 3.25, Lemma 3.26]{BSW}).
It says that if
\begin{enumerate}[(a)]
\item $\lim_{|x|\to\infty}N(x, B(-x,r))=0$ for any $r>0$,
\item $x\longmapsto p(x,\xi)$ is continuous for any $\xi\in\BR^d$,
\item the martingale problem for $L$ is well posed,
\end{enumerate}
then $\BM$ is Feller.
Using this  criterion and the results of \cite{St}
we get the following example of a Feller process.

\begin{example}
\label{ex5.7ab}
Assume that
\begin{equation}
\label{eq5.21} \lambda^{-1}I\le Q(x)\le\lambda I,\qquad
|b(x)|\le\Lambda,\quad 0\le c(x)\le \Lambda,\quad x\in\BR^d,
\end{equation}
for some $\lambda\ge1$, $\Lambda>0$  ($I$ is the $d$-dimensional
identity matrix),  $q_{ij}, b_i, c$ are  continuous, and $N_B$ (cf. condition (M2)) is continuous for any $B\in\mathcal B(\BR^d)$.
Furthermore, suppose that
\begin{equation}
\label{eq5.12mar.2}
\lim_{|x|\to\infty} \int_{\BR^d}\min\{|y|^2/|x|^2,1\}\,N(x,dy)=0.
\end{equation}
(for instance, the last condition is  satisfied if $N$ is independent of $x$
or  big jumps are integrable, that is  $\sup_{x\in\BR^d}\int_{\BR^d}\fch_{\{|y|\ge 1\}}\,N(x,dy)<\infty$).
Then $\mathbb M$ solving the martingale problem for $L$  is a Feller process.
Indeed, by \cite[Theorem 2.2, Theorem 4.3]{St} the martingale problem for $L$ is well posed, so we have (c).
By the assumptions made on the coefficients of $L$ and function $N_B$, (b) is satisfied. What is left is to show that (a) is satisfied. By \cite[Lemma 3.26]{BSW} condition (i) is satisfied if
\begin{equation}
\label{eq5.12mar.2c}
\lim_{|x|\to \infty} \sup_{|\xi|\le 1/|x|}(\mbox{\rm Re}\,p(x,\xi) -p(x,0)-\frac12 \xi\cdot Q(x)\xi)=0.
 \end{equation}
The quantity in the bracket equals to
\[
\int_{\BR^d}(1-\cos(\xi\cdot y))\,N(x,dy)\le \int_{\BR^d}\min\{|y|^2|\xi|^2,1\}\,N(x,dy).
\]
From this and \eqref{eq5.12mar.2} we easily get \eqref{eq5.12mar.2c}.
\end{example}

Consider the following condition
\begin{equation}
\label{eq5.18}
\lim_{|\xi|\to\infty}\frac{\inf_{z\in\BR^d}\mbox{Re}\, p(z,\xi)}{\log(1+|\xi|)}=\infty.
\end{equation}
Clearly, it is satisfied for the operator from Example
\ref{ex5.7ab}.  By \cite[Theorem
1.2]{SW}, if \eqref{eq5.18} holds, then $\BM$  has a transition
density $p(t,x,y)$. Consequently, the part $\BM^D$ of $\BM$ on $D$
has a transition density, which we denote by $p_D(t,x,y)$. Set
\begin{equation}
\label{eq5.dfofr284}
r(t)=(4\pi)^{-d}\int_{\BR^d}\exp\Big(-\frac{t}{16}\inf_{z\in\BR^d}\mbox{Re}\,p(z,\xi)\Big)\,d\xi.
\end{equation}
By \cite[Theorem 1.2]{SW}, under \eqref{eq5.18} we have
\begin{equation}
\label{eq5.12} r_{q^*}(t,x)\le \begin{cases} r(t), &\quad \mbox{if
}q^{*}=\infty,
\smallskip\\
r(t)[m(D)]^{1/{q^{*}}}, &\quad\mbox{if }q^{*}\in [1,\infty).
\end{cases}
\end{equation}

\begin{example}
\label{ex5.9}
Let $\alpha\in C^1_b(\BR^d)$, and
\[
0<\underline\alpha:=\inf_{x\in\BR^d}\alpha(x)\le\bar
\alpha:=\sup_{x\in\BR^d}\alpha(x)<2.
\]
Furthermore, suppose  that either $d\ge 2$,  or $d=1$ and there
exists $K>0$ such that $\sup_{|x|\ge K}\alpha(x)\in (0,1)$. Let
\[
p(x,\xi)=|\xi|^{\alpha(x)},\quad x,\xi\in\BR^d.
\]
Then, by \cite{SW}, there exists a Feller process $\mathbb M$ with
symbol $p$. One can  easily check that there is $C>0$ such that  for every $t\ge 1$,
\[
r(t)\le Ct^{-d/\bar\alpha}.
\]
The semigroup $(P_t)$ associated with $\mathbb M$ is generated by
the operator $\Delta^{\alpha(\cdot)}$.
\end{example}

In case $D$ is bounded one can estimate $r_1$ without imposing
condition (\ref{eq5.18}). Let $\delta=\mbox{diam}\,D$. If
$\delta<\infty$,  then by \cite[Theorem 5.9]{BSW},
\begin{equation}
\label{eq5.13}
r_1(t,x)\le 3e^{-tc(x,\delta)/16},\quad x\in D,
\end{equation}
where
\[
c(x,\delta):=\sup_{|\xi|\le1/(2\delta k^*(x,\delta))}\inf_{|y-x|\le3\delta}\mbox{Re}\,p(y,\xi),
\]
and
\[
k^*(x,\delta):=\inf\Big\{k\ge \big(\arcsin{\sqrt{2/3}}\big)^{-1}: \sup_{|\xi|\le 1/(2k\delta)}\sup_{|y-x|\le\delta}\frac{\mbox{Re}\,p(y,\xi)}{|\xi||\mbox{Im}\,p(y,\xi)|}\ge 4\delta\Big\}.
\]

\begin{remark}
Observe that if $p$ satisfies the following {\em sector condition}: there exists $\kappa>0$ such that
\[
|\mbox{\rm Im}\,p(x,\xi)|\le\kappa \mbox{\rm Re}\,p(x,\xi),\quad x,\xi\in\BR^d,
\]
then $k^*(x,\delta)= (\arcsin{\sqrt{2/3}}\big)^{-1},\,
x\in\BR^d,\, \delta>0$. The sector condition is trivially
satisfied when $\mbox{Im}\,p(\cdot,\cdot)=0$. In particular, the
sector  condition is satisfied by the symbol of the  operator
$\Delta^{\alpha(\cdot)}$ from Example \ref{ex5.9}.
\end{remark}

\begin{proposition}
\label{prop5.3ad}
\begin{enumerate}[\rm(i)]
\item If \mbox{\rm(\ref{eq5.18})} is satisfied and $w, \hat h_\infty\in L^1(D;m)$,  then
\begin{align*}
V^*_{T_s}(x)\le r(T_s)( \|w\|_{L^1(D;m)}+ \|\hat h_\infty\|_{L^1(D;m)}).
\end{align*}

\item Let $q\in (1,\infty]$. If \mbox{\rm(\ref{eq5.18})} is satisfied, $m(D)<\infty$ and $w, \hat h_\infty\in L^q(D;m)$,  then
\[
V^*_{T_s}(x) \le r(T_s)[m(D)]^{1/q^*}( \|w\|_{L^q(D;m)}
+ \|\hat h_\infty\|_{L^q(D;m)}).
\]

\item If $\delta:=\mbox{\rm diam}\,D<\infty$ and $w,\hat h_\infty\in L^\infty(D;m)$, then
\[
V^*_{T_s}(x) \le 3e^{-(T_s)c(x,\delta)/16}( \|w\|_{L^\infty(D;m)}
+ \|\hat h_\infty\|_{L^\infty(D;m)}).
\]
\end{enumerate}
\end{proposition}
\begin{proof}
Assertions (i), (ii) follow from Proposition \ref{prop5.5} and
(\ref{eq5.12}), and (iii) follows from  Proposition \ref{prop5.5}
and (\ref{eq5.13}).
\end{proof}

\subsection{Ultracontractivity and intrinsic ultracontractivity}
\label{sec5.3}

Let $m$ be a $\sigma$-finite measure on $\BR^d$ with full support
and $D\subset\BR^d$ be an open subset such that $m(D)<\infty$.
We assume that $\BM^D$ has the
transition density $p_D(t,x,y)$ with respect to $m$.
Let $(\hat P^D_t)$ be  the dual semigroup to
$(P^D_t)$ relative to $m$ in the sense that  for all $t>0$ and
nonnegative $f,g\in \BB(D)$,
\[
\int_{D} f(x) P^D_{t} g(x) m(d x)=\int_{D} g(x) \hat{P}^D_{t} f(x)
m(dx).
\]
Clearly, $\hat P_tf(y)=\int_Dp_D(t,x,y)f(x)\,m(dx)$, $y\in D,\, f\in\BB^+(D)$.
The semigroup $(P^D_t)$ is said to be  {\em ultracontractive} if  for every  $t>0$ there exists $b(t)>0$
such that

\begin{equation}
\label{eq5.22}
p_D(t,x,y)\le b(t),\quad x,y\in D.
\end{equation}

It is known that if $(\hat P^D_t)$ is Markov, then $(P^D_t)$
is ultracontractive if and only if   the operators $P^D_t:L^2(D;m)\to L^\infty(D;m)$ and
$\hat P^D_t:L^2(D;m)\to L^\infty(D;m)$ are bounded.  Moreover, if $(P^D_t)$ is ultracontractive and $(\hat P^D_t)$ is  Markov, then \eqref{eq5.22}
holds with $b(t)=\max\{\|P^D_t\|_{L^2(D;m)\to L^\infty(D;m)}, \|\hat P^D_t\|_{L^2(D;m)\to L^\infty(D;m)}\}$ and $b(t)$ is nonincreasing as $t\to \infty$ (see, e.g., \cite[Proposition 2.2]{KS}). 

\begin{remark}
Let $\BM$ be a  Feller process with symbol \eqref{eq5.symbf1}. By \cite[Theorem 1.2]{SW}, if (\ref{eq5.18}) is satisfied, then $(P^D_t)$ is ultracontractive and
\begin{equation}
\label{eq5.dest65}
p_D(t,x,y)\le r(t),\quad t>0,\, x,y\in D,
\end{equation}
where $r$ is defined by (\ref{eq5.dfofr284}).
\end{remark}

\begin{proposition}
\label{prop5.11} Let $q\in [1,\infty)$. If $w,\hat h_\infty\in
L^q(D;m)$ and $(P^D_t)$ is ultracontractive, then
\begin{equation}
\label{eq5.impr1}
V^*_{T_s}(x)\le\big(r_1(T_s,x)\big)^{1/q^*}\big(b(T_s)\big)^{1/q}
\big( \|w\|_{L^q(D;m)}+ \|\hat h_\infty\|_{L^q(D;m)}\big).
\end{equation}
\end{proposition}
\begin{proof}
Let $\rho\in L^q(D;m)$. Then
\begin{align*}
P^D_t\rho=\mathbb
E_x\fch_{\{t<\tau_D\}}\rho(X_t)&
=\BE_x\fch_{\{t<\tau_D\}}\big(\rho(X_t)\fch_{\{t<\tau_D\}}\big)
\\ &\le \big(P_x(t<\tau_D)\big)^{1/q^*}\big(\BE_x\fch_{\{t<\tau_D\}}\rho^q(X_t)\big)^{1/q}
\\ &=\big(r_1(t,x)\big)^{1/q^*}\Big(\int_Dp_D(t,x,y)\rho^q(y)\,m(dy)\Big)^{1/q}
\\ &\le \big(r_1(T_s,x)\big)^{1/q^*}\big(b(T_s)\big)^{1/q} \|\rho\|_{L^q(D;m)}.
\end{align*}
From this and Theorem \ref{th5.1} we get the desired estimate.
\end{proof}

\begin{remark}
(i) It is worth noting that the difference between
\eqref{eq5.impr1} and  the inequality formulated  in Proposition \ref{prop5.3ad}(ii)
is that in \eqref{eq5.impr1} the factor $r(T_s)[m(D)]^{1/q^*}$ appearing in Proposition \ref{prop5.3ad}(ii) has been replaced by $\big(r_1(T_s,x)\big)^{1/q^*}\big(b(T_s)\big)^{1/q}$.
This is an important refinement.  Recall that
\[
r_1(T_s,x)=P_x(\tau_D>T_s)=\int_Dp_D(T_s,x,y)\,m(dy),
\]
while  $r(T_s)=\sup_{x,y\in\BR^d}p(T_s,x,y)$.
The asymptotics   of the  quantity $P_x(\tau_D>T_s)$ as $T_s\to\infty$ is  well studied in the literature.
Moreover, the term $r_1(T_s,x)$ tends to zero when $x$ approaches the boundary of $D$, while
$r(T_s)$ is independent of $x\in D$.
\smallskip\\
(ii) The decay of $V^*_{T_s}$ similar to \eqref{eq5.impr1}  is asserted in Proposition \ref{prop5.3ad}(iii). However, in Proposition \ref{prop5.3ad}(iii)
boundedness of  $w$ and  $\hat h_\infty$ is required.  Thanks to the ultracontractivity of $(P^D_t)$ we may dispense with this restriction.
\end{remark}

\begin{corollary}
\label{cor5.4} Let $q\in [1,\infty)$.  Assume that
$\delta=\mbox{\rm diam}\,D<\infty$, $w,\hat h_\infty\in L^q(D;m)$,
and $\mathbb M=\{(X,P_x),x\in\BR^d\}$ is a Feller process with the symbol $p$ $($cf. \eqref{eq5.symbf1}$)$.  If
$(P^D_t)$ is ultracontractive, then
\[
V^*_{T_s}(x)\le
3^{1/q^*}e^{-T_sc(x,\delta)/(16q^*)}\big(b(T_s)\big)^{1/q}
\big( \|w\|_{L^q(D;m)}+ \|\hat h_\infty\|_{L^q(D;m)}\big).
\]
\end{corollary}
\begin{proof}
Follows from Proposition \ref{prop5.11} and \eqref{eq5.13}.
\end{proof}


\begin{example}[divergence form operators]
\label{ex5.65}
Let $Q, b$ and $c$ be as in \eqref{eq.opa} and   satisfy \eqref{eq5.21}.
By \cite{St2} there exists a diffusion process $\BM$ with transition density $p(\cdot,\cdot,\cdot)$
being a fundamental solution for the operator
\begin{equation}
\label{eq5.dfop98}
L=\sum^d_{i,j=1}\partial_{x_i}(q_{ij}(x)\partial_{x_j})+\sum_{i=1}^db_i\partial_{x_i}+c.
\end{equation}
By Aronson's estimates, there exists $M>0$ depending only on $\Lambda, d$ and $T>0$
such that
\[
p(t,x,y)\le Mt^{-d/2}e^{-|x-y|^2/(Mt)}
\]
for all $t\in (0,T]$ and $x,y\in\BR^d$. By Proposition \ref{prop5.11}, (\ref{eq5.impr1}) holds with $b$ defined by the right-hand side of the above inequality. Better rate of convergence we get in case $c=0$, $b^i=0$, $i=1,\dots,d$, and  $D$ is Lipschitz (see Example \ref{ex5.21}).
\end{example}

In the rest of this subsection we assume that for each $t>0$,
$p_D(t,\cdot,\cdot)$ is   bounded and strictly
positive. Moreover, we assume that $(\hat P^D_t)$ is Markov. Let
\begin{equation}
\label{eq5.23}
-\lambda_1=\sup\{\mbox{Re}\lambda:\lambda\in\sigma(L^D)\},
\end{equation}
where $\sigma(L^D)$ is the spectrum  of the infinitesimal
generator of the semigroup $(P^D_t)$ on $L^2(D;m)$. By Jentzsch's
theorem (see \cite[Theorem V.6.6, page 337]{Schaefer}) there exist
unique, up to a multiplicity constant, strictly positive functions
$\phi_1,\hat\phi_1\in L^2(D;m)$  such that
\[
P^D_t\phi_1(x)=e^{-t\lambda_1}\phi_1(x),
\qquad\hat P^D_t\hat\phi_1(x)=e^{-t\lambda_1}
\hat \phi_1(x),\quad x\in D,\, t>0,
\]
The semigroup $(P^D_t)$ is called {\em intrinsically
ultracontractive} (see \cite{DaS,KS}) if for every $t>0$ there exists a constant
$c_t>0$ such that
\[
p_D(t,x,y)\le c_t\phi_1(x)\hat\phi_1(y),\quad x,y\in D.
\]
Equivalently, $(P^D_t)$ is  intrinsically  ultracontractive if
$(Q^D_t)$ is ultracontractive on $L^2(D;\mu)$, where
\[
Q^D_tf(x)=e^{\lambda_1t}P^D_t(f\phi_1)\phi_1^{-1},
\qquad \mu(dy)=\phi_1(y)\hat\phi_1(y)\,m(dy).
\]
 From the last statement  and the fact that $(Q^D_t)$ $(\hat Q^D_t)$ are Markov, we infer, in
particular, that for every $t_0>0$ there exists $M(t_0)>0$ such that
\begin{equation}
\label{eq5.19} p_D(t,x,y)\le
M(t_0)e^{-\lambda_1t}\phi_1(x)\hat\phi_1(y), \quad x,y\in D,\,
t\ge t_0,
\end{equation}
with $M(t_0)=\max\{\|Q^D_t\|_{L^2(D;\mu)\to L^\infty(D;\mu)}, \|\hat Q^D_t\|_{L^2(D;\mu)\to L^\infty(D;\mu)}\}$.

\begin{proposition}
\label{prop5.14}
Let $q\in [1,\infty]$  and $t_0>0$. Assume that
$(P^D_t)$ is intrinsically ultracontractive and $w,\hat
h_\infty\in L^q(D;m)$. Then for all $T_s\ge t_0$ and $x\in D$,
\[
V^*_{T_s}(x) \le M(t_0)e^{-\lambda_1 T_s}\phi_1(x)
\|\hat\phi_1\|_{L^{q^*}(D;m)}(\|w\|_{L^q(D;m)} + \|\hat
h_\infty\|_{L^q(D;m)}).
\]
\end{proposition}
\begin{proof}
Let $t\ge t_0$. By (\ref{eq5.19}),
\[
r_{q^*}(t,x)\le  M(t_0)e^{-\lambda_1 t}\phi_1(x)
\|\hat\phi_1\|_{L^{q^*}(D;m)},
\]
so the desired estimate follows from  Proposition \ref{prop5.5}.
\end{proof}

\begin{example}[nondivergence form operator]
\label{ex5.15} Let $m$ be the Lebesgue measure on $\BR^d$ and
$D\subset\BR^d$ be a bounded domain. Consider the operator
(\ref{eq.opa}) with $N=0$. Assume that $q_{ij},b_i$,
$i,j=1,\dots,d$, and $c$ are bounded $C^{\infty}$ functions on
$\BR^d$. Furthermore, we assume that (\ref{eq5.21}) is satisfied,
$\partial b_i/\partial x_i$, $i=1,\dots,d$, are bounded and
\[
c(x)-\sum^d_{i=1}\partial_{x_i}b_i(x)\ge0, \quad x\in \BR^d.
\]
Then the assumptions  formulated in the first paragraph of this subsection
are satisfied (see \cite[p. 538]{KS}). If $D$ is
Lipschitz, then by \cite[Theorem 3.9]{KS} the semigroup $(P^D_t)$
is intrinsically ultracontractive.
\end{example}

\begin{example}[divergence form operator]
\label{ex5.21}
Consider the operator $L$ from Example \ref{ex5.65} but assume that $b^i=0$, $i=1,\dots,d$, and $c=0$. In \cite{DaS} it is proved that if $D$ is bounded and Lipschitz, then $(P^D_t)$ is intrinsically ultracontractive. Thus,  Proposition \ref{prop5.14}  is applicable.
This proposition requires $\hat h_\infty$ to be in $ L^q(D;m)$. In Subsection 5.4 (see Example \ref{ex5.28}) we provide different results on the rate of convergence  requiring assumptions on $h$ and not on $\hat h_\infty$.
\end{example}

\begin{example}
Let $\BM=(X,P_x)$ be a symmetric L\'evy process in $\BR^d$ with L\'evy measure $\nu$ such that
\begin{equation}
\label{eq5.24}
\nu(B(x,r))>0
\end{equation}
for all $x\in\BR^d$ and $r>0$, where $B(x,r)=\{y\in\BR^d:|x-y|<r\}$.
Assume also that $\BM$ has
the transition density $p(t,x,y)=p(t,x-y)$ (with respect to the Lebesgue measure)  such that
$p(t,\cdot,\cdot)$ is continuous for every $t>0$ and moreover,  for every $\delta>0$ there exists a constant $c(\delta)>0$ such that $p(t,x)\le c(\delta)$ for all $t>0$ and $|x|\ge\delta$.
In \cite[Theorem 3.1]{G} (see also \cite{Ku} for the special case of rotationally symmetric $\alpha$-stable L\'evy process) it is proved that for any bounded open set $D\subset\BR^d$ the semigroup $(P^D_t)$ associated with the process $\BM$ killed upon exiting $D$ is intrinsically ultracontractive. Note also that (\ref{eq5.24}) can be weaken if we additionally assume that $D$ is a connected Lipschitz set.
\end{example}

\subsection{Dirichlet forms}
\label{sec5.4}

Let $E$ be a locally compact separable metric space and $m$  be a
Radon measure on $E$ with full support. Let $(\EE,\DD(\EE))$ be a
regular symmetric Dirichlet form on $L^2(E;m)$. For an open
$U\subset E$ we define the capacity of $U$ by
\[
\mbox{Cap}_E(U)=\inf\{\EE_1(u,u): u\ge\fch_{U}\,\, m\mbox{-a.e., }
u\in \DD(\EE)\},
\]
where $\EE_1(u,u)=\EE(u,u)+\|u\|^2_{L^2(E;m)}$. For an arbitrary
$B\subset D$ we set
\[
\mbox{Cap}_E(B)=\inf_{B\subset U}\mbox{Cap}_E(U).
\]
Recall that a function $u$ on $E$ is called quasi-continuous if
for every $\varepsilon>0$ there exists an open set
$U_\varepsilon\subset E$ such that
$\mbox{Cap}_E(U_\varepsilon)<\varepsilon$ and $u|_{E\setminus
U_\varepsilon}$ is continuous. By \cite[Theorem 2.1.3]{FOT} every
function $u\in \mathcal D(\EE)$ has an  $m$-version $\tilde u$
which is  quasi-continuous. Let  $(\EE^D,\mathcal D(\EE^D))$ be
the part of $(\EE,\mathcal D(\EE))$ on $D$, i.e.
\[
\EE^D(u,v)=\EE(u,v),\quad u,v\in \mathcal D(\EE^D) :=\{u\in
\mathcal D(\EE): \tilde u=0\mbox{ q.e. on }E\setminus D\}
\]
(here $\tilde u=0$ q.e. means that $\tilde u=0$ except for a set
of capacity $\mbox{Cap}_E$ equal to zero). By \cite[Section
7]{FOT} there exists a Hunt process $\BM$ associated with
$(\EE,\mathcal D(\EE))$. Moreover, $\BM^D$ is associated with
$(\EE^D,\mathcal D(\EE^D))$ which is again a regular Dirichlet
form  (see \cite[Theorem 4.4.3]{FOT}). Analogously to
$\mbox{Cap}_E$ we define $\mbox{Cap}_D$. By \cite[Theorem
2.1.3]{FOT}, any function $u\in \DD(\EE^D)$ has an $m$-version
$\tilde u$  which is quasi-continuous. Formally we should write
"quasi-continuous with respect to $\mbox{Cap}_E$ or
$\mbox{Cap}_D$" and not just "quasi-continuous". However both
capacities are  equivalent on $D$ (see \cite[Theorem
4.4.3]{FOT}), so  the above terminology does not lead to ambiguity.

Throughout the subsection, we assume that $(\EE^D,\DD(\EE^D))$ is
transient, i.e. there exists a strictly positive $\chi\in
L^2(D;m)$ such that
\begin{equation}
\label{eq5.144}
\int_D |u|\chi\,dm\le (\EE^D(u,u))^{1/2},\quad u\in\DD(\EE^D).
\end{equation}
By \cite[Theorems 1.5.2, 1.5.3]{FOT}, there exists an extension
$\mathcal D_e(\EE^D)\subset L^1(D;\nu\cdot m)$ of $\mathcal
D(\EE^D)$, called the extended Dirichlet space, such that
$(\EE^D,\mathcal D_e(\EE^D))$ is a Hilbert space. Furthermore,
$\mathcal D_e(\EE^D)\cap L^2(D;m)=\mathcal D(\EE^D)$ is dense in
$\mathcal D_e(\EE^D)$. By \cite[Theorem 2.1.3]{FOT} (see also the
comments following \cite[(2.1.14)]{FOT}) every function  $u\in
\mathcal D_e(\EE^D)$ has an  $m$-version $\tilde u$  which is
quasi-continuous. From now on, we  consider quasi-continuous
versions of functions  in $\DD(\EE^D)$ or $\DD_e(\EE^D)$.

Below we first give estimates of $V^*_{T_s}$ in the space
$L^1(D;\rho\cdot m)$ with some weight $\rho$. Then we give
pointwise estimates under  additional assumptions on $\BM^D$.

\begin{lemma}
\label{lm5.1}
Assume that $u\in \mathcal D_e(\EE^D)$. Then for every positive $\rho\in L^2(D;m)$ such that $R^D\rho\in L^2(D;m)$,  and every $t\ge0$,
\begin{equation}
\label{eq5.146}
\int_D\big(\sup_{\tau\in\TT^{\tau_D}}\mathbb E_x |u(X_\tau)|\big)\,\rho(x)\,m(dx)
\le(\EE^D(u,u))^{1/2}\|\rho\|^{1/2}_{L^2(D;m)}\|R^D\rho\|^{1/2}_{L^2(D;m)}.
\end{equation}
\end{lemma}
\begin{proof}
We let
\begin{equation}
\label{eq5.dfofga1}
\Gamma=\{\eta\in \DD_e(\EE):\eta\ge|u|\,\, m\mbox{-a.e.}\}.
\end{equation}
By \cite[Theorem 1.1.1]{Oshima} applied to $\Gamma$ and $J=0$ there exists a unique $v\in \Gamma$  such that $\EE(v,w-v)\ge0$ for all $w\in\Gamma$. Moreover, $v$ has the property that
\[
\EE^D(v,v)=\inf\{\EE^D(\phi,\phi): \phi\in \DD_e(\EE^D)\,\,\mbox{and }\phi\ge |u|\,\, m\mbox{-a.e.} \}.
\]
Let $\bar w$ be a positive element of $\DD(\EE^D)$. Then $\bar w+v\in\Gamma$, so $\EE(v,\bar w)=\EE(v,\bar w+v-v)\ge0$. Consequently, $v$ is excessive by
\cite[Theorem 1.4.1]{Oshima}. From this and  \cite[Theorem A.2.5]{FOT} it follows that $v(X)$ is a c\`adl\`ag process. Furthermore, from the fact that $v$ is excessive and the Markov property it follows that $v(X)$ is a supermartingale  under the measure $P_x$ for $m$-a.e. $x\in D$. Hence
\[
\sup_{\tau\in\TT^{\tau_D}}\BE_x
|u(X_\tau)|\le\sup_{\tau\in\TT^{\tau_D}}\BE_x v(X_\tau)\le \BE_x
v(X_0)=v(x)
\]
for $m$-a.e. $x\in D$. Multiplying both sides of the above
inequality by $\rho$ and then integrating with respect to $m$
yields
\begin{equation}
\label{eq5.012} \int_D\big(\sup_{\tau\in\TT^{\tau_D}}\mathbb E_x
|u(X_\tau)|\big)\,\rho(x)\,m(dx) \le \int_D v \cdot
\rho\,dm=\EE^D(v,R^D\rho).
\end{equation}
On the other hand,
\begin{align}
\label{eq5.013} \EE^D(v,R^D\rho)&\le (\EE^D(v,v))^{1/2}
(\EE^D(R^D\rho,R^D\rho))^{1/2} \nonumber\\
&=(\EE^D(v,v))^{1/2}
(\rho,R^D\rho)^{1/2}_{L^2(D;m)}\nonumber \\
& \le (\EE^D(|u|,|u|))^{1/2}\|\rho\|^{1/2}_{L^2(D;m)}
\|R^D\rho\|^{1/2}_{L^2(D;m)}
\end{align}
Since $\EE^D$ is a transient Dirichlet form, every normal
contraction operates on $\EE^D$. Hence $|u|\in\DD(\EE^D)$ and
$\EE^D(|u|,|u|)\le \EE^D(u,u)$. Therefore (\ref{eq5.012}) and
(\ref{eq5.013}) imply  (\ref{eq5.146}).
\end{proof}

By  \cite[Theorem 4.3.2]{FOT}, if $\psi\in\DD_e(\EE)$, then $\gamma\in\DD_e(\EE)$.
Below we shall freely  use this fact without explicit mention.

\begin{lemma}
\label{lem5.3} Assume that $h,\psi\in\DD_e(\EE)$. Then for all
$t>0$ and positive $\rho\in L^2(D;m)$ such that $R^D\rho\in
L^2(D;m)$,
 \[
\int_DP^D_t\hat h_\infty(x)\,\rho(x)\,m(dx)
\le(\EE^D(\gamma-h,\gamma-h))^{1/2}\|P^D_{t}
\rho\|^{1/2}_{L^2(D;m)}\|R^D\rho\|^{1/2}_{L^2(D;m)}.
\]
\end{lemma}
\begin{proof}
Since  $\gamma\in \DD_e(\EE)$ and
$h=\gamma$ on $E\setminus D$, we have  $\gamma-h\in \DD_e(\EE^D)$.
Therefore the result follows from the definition of $\hat h_{\infty}$
and Lemma \ref{lm5.1} applied to $u=\gamma-h$.
\end{proof}

We are now ready to give an estimate for $V^*_{T_s}$ in case
$h,\psi\in\DD_e(\EE)$. It is worth stressing that in general,
under the assumption  $h\in\DD_e(\EE)$ the process $h(X)$ is not a
semimartingale under $P_x$.  Therefore Lemma \ref{lem5.3} and
Proposition \ref{prop5.18} below apply in situations quite
different from those considered earlier, for instance in Corollary
\ref{cor5.2} or Proposition \ref{prop5.2}.

\begin{proposition}
\label{prop5.18} Assume that  $w\in L^2(D;m)$, $\psi, h\in
\mathcal D_e(\EE)$ and $\rho\in L^2(D;m)$ is a positive
function such that $R^D\rho\in L^2(D;m)$. Then
\begin{align*}
&\int_DV^*_{T_s}(x)\rho(x)\,m(dx)\\
&\qquad\le \Big(\|w\|_{L^2(D;m)}+
(\EE^D(\gamma-h,\gamma-h))^{1/2}\|R^D\rho\|^{1/2}_{L^2(D;m)}\Big)
\|P^D_{T_s}\rho\|^{1/2}_{L^2(D;m)}.
\end{align*}
\end{proposition}
\begin{proof}
Follows from Lemma \ref{lem5.3} and Theorem \ref{th5.1}.
\end{proof}

Let $\alpha:(0,\infty)\to(0,\infty)$ be a nonincreasing function
and $\Phi:L^2(D;m)\to[0,\infty]$ satisfy the following conditions:
\begin{enumerate}
\item[(a)] $\Phi(cu)=c^2\Phi(u)$ for all $ u\in L^2(D;m)$ and $c\in\BR$,
\item[(b)] $\Phi(P^D_t\rho)\le\Phi(\rho)$ for all $\rho\in L^2(D;m)$.
\end{enumerate}
Suppose that the following Sobolev-type inequality is satisfied:
\begin{equation}
\label{eq5.sob}
\|u\|^2_{L^2(D;m)}\le\alpha(r)\EE^D(u,u)+r\Phi(u),
\quad r>0,\, u\in \mathcal D(\EE^D).
\end{equation}
By \cite{Wang}, if \eqref{eq5.sob} is satisfied,  then
\begin{equation}
\label{eq5.149} \|P^D_t\rho\|^2_{L^2(D;m)}\le
\beta(t)(\Phi(\rho)+\|\rho\|^2_{L^2(D;m)}),\quad t>0,\, \rho\in
L^2(D;m),
\end{equation}
where
\[
\beta(t)=\inf\{r>0: -\alpha(r)\ln r\le 2t \},\quad t>0.
\]

\begin{remark}
\label{rem5.19} Let $\lambda_2$ be the bottom of the spectrum,
that is
\begin{equation}
\label{eq5.20}
\lambda_2=\inf\{\EE^D(u,u): u\in \DD(\EE^D),\, \|u\|_{L^2(D;m)}=1\}.
\end{equation}
If $\lambda_2>0$, then by \eqref{eq5.149},
\[
\|P^D_t \rho\|_{L^2(D;m)}\le e^{-2\lambda_2 t}\|\rho\|_{L^2(D;m)},
\quad \rho\in L^2(D;m).
\]
Indeed, for $u\in\DD_e(\EE^D)$ we have $\EE^D(u,u)\le
\lambda_2^{-1}\|u\|^2_{L^2(D;m)}$, so (\ref{eq5.sob}) is satisfied
with $\alpha(r)=\lambda^{-1}_2$, $r>0$, $\Phi=0$.  Clearly, we
then have $\beta(t)=e^{-2t\lambda_2}$, and we apply
(\ref{eq5.149}).
\end{remark}

\begin{corollary}
\label{cor5.6} Let $w,\psi, h$ and $\rho$ satisfy the assumptions
of Proposition \ref{prop5.18}. If \eqref{eq5.sob} is satisfied,
then
\begin{align*}
\int_DV^*_{T_s}(x)\rho(x)\,m(dx)\le C(\beta(T_s))^{1/4},
\end{align*}
where
\[
C=\Big(\|w\|_{L^2(D;m)}+
(\EE^D(\gamma-h,\gamma-h))^{1/2}\|R^D\rho\|^{1/2}_{L^2(D;m)}\Big)
\big(\Phi(\rho)+\|\rho\|^2_{L^2(D;m)}\big)^{1/4}.
\]
\end{corollary}
\begin{proof}
Follows from Proposition \ref{prop5.18} and \eqref{eq5.149}.
\end{proof}

One can improve slightly the rate of convergence given above  provided we
know that $\Phi(\hat h_\infty)<\infty$.

\begin{proposition}
\label{prop5.4} Assume that   $w\in L^2(D;m)$, $\psi, h\in
\DD_e(\EE)$, and \eqref{eq5.sob} is satisfied. Then for every
positive  $\rho\in L^2(D;m)$, and any $r>0$,
\begin{align*}
\int_DV^*_{T_s}(x)\rho(x)\,m(dx)\le C_r(\beta(T_s))^{1/2},
\end{align*}
where
\[
C_r=\big(\|w\|_{L^2(D;m)}+ (\alpha(r)\EE^D(\gamma-h,\gamma-h)
+r\Phi(\hat h_\infty))^{1/2}\big)\big(\|\rho\|^2_{L^2(D;m)}
+\Phi(\rho)\big)^{1/2}.
\]
\end{proposition}
\begin{proof}
First observe that if we knew that \eqref{eq5.146} holds with the
right-hand side replaced by $(\alpha(r)\EE^D(u,u)+r\Phi(\hat
h_\infty))^{1/2}\|\rho\|_{L^2(D;m)}$, then using this modified
inequality and repeating step by step the reasoning of the proof
of Lemma \ref{lem5.3} and Proposition \ref{prop5.18} we would get
\begin{align*}
&\int_DV^*_{T_s}(x)\rho(x)\,m(dx)\le C_r \|P^D_{T_s}\rho\|_{L^2(D;m)}
\end{align*}
with $C_r$ as in the proposition. This together with \eqref{eq5.149} implies the desired estimate. Therefore, what is left is to show that for every $u\in \mathcal D_e(\EE^D)$,
\begin{equation}
\label{eq5.148}
\int_D\big(\sup_{\tau\in\TT^{\tau_D}}\mathbb E_x |u(X_\tau)|\big)\,\rho(x)\,m(dx)\le(\alpha(r)\EE^D(u,u)+r\Phi(\hat h_\infty))^{1/2}\|\rho\|_{L^2(D;m)}.
\end{equation}
Let $v$ be defined as in the proof of Lemma \ref{lm5.1}.
The proof of \eqref{eq5.148} differs from the proof of \eqref{eq5.146} only in the estimate of the term $\int_D v\rho\,dm$ appearing in (\ref{eq5.012}).
In the present situation, by Schwartz's inequality and \eqref{eq5.sob} we have
\begin{align*}
\int_Dv\rho\,dm\le \|v\|_{L^2(D;m)}\|\rho\|_{L^2(D;m)}&\le (\alpha(r)\EE^D(v,v)+r\Phi(v))^{1/2}\|\rho\|_{L^2(D;m)}\\&
\le  (\alpha(r)\EE^D(|u|,|u|)+r\Phi(v))^{1/2}\|\rho\|_{L^2(D;m)}\\&
\le  (\alpha(r)\EE^D(u,u)+r\Phi(v))^{1/2}\|\rho\|_{L^2(D;m)}.
\end{align*}
Take $u=\gamma-h$ in \eqref{eq5.dfofga1}. Then, by \cite[Proposition 3.16, Lemma 3.7]{K:SM}, the process  $v(X)$
is the first component of the    unique solution of RBSDE$^{\tau_D}(0,0,|\gamma-h|(X))$.
Hence, by Theorem  \ref{th3.3},    $v=\hat h_\infty$, and the proof is complete.
\end{proof}

\begin{corollary}
\label{cor5.fidode5}
Suppose that the assumptions of Proposition \ref{prop5.4} hold, and moreover there exists $\varepsilon>0$ such that $\Phi(\eta)\le \varepsilon \|\eta\|^2_{L^2(D;m)}$, $\eta\in L^2(D;m)$. Then the conclusion of Proposition \ref{prop5.4} holds with $C_r$ replaced by
\[
\hat C:= (1+\varepsilon)^{1/2}\big(\|w\|_{L^2(D;m)}
+(2\alpha(1/(2\varepsilon))\EE^D(\gamma-h,\gamma-h))^{1/2}\big)\|\rho\|_{L^2(D;m)}.
\]
\end{corollary}
\begin{proof}
Let $u=\gamma-h\in\DD_e(\EE^D)$ and $v$ be defined as in the proof  of Lemma \ref{lm5.1}. In the proof of Proposition \ref{prop5.4} it is shown that then $v=\hat h_{\infty}$.
Hence
\[
\EE^D(\hat h_\infty,\hat h_\infty)\le \EE^D(|\gamma-h|,|\gamma-h|)\le \EE^D(\gamma-h,\gamma-h).
\]
The desired result follows from Proposition \ref{prop5.4}, the assumption on $\Phi$, and the above inequality.
\end{proof}

\begin{example}
\label{ex5.28}
Let $D$ be a domain in $\BR^d$ bounded at least in one direction, and $m$ be the Lebesgue measure on
$\BR^d$. Let $q_{ij}:D\rightarrow\BR$, $i,j=1,\dots,d$, be
measurable functions such   for each $x\in D$ the matrix
$Q(x)=[q_{ij}(x)]^d_{i,j=1}$ is  symmetric and satisfies the first
condition in (\ref{eq5.21}). Consider the form
\[
\EE^D(u,v)=\int_DQ(x)\nabla u(x)\nabla v(x)\,dx,\quad u,v\in
\DD(\EE^D)=H^1_0(D).
\]
It is well known (see, e.g., \cite[Section 3.1]{FOT}) that
$(\EE^D,H^1_0(D))$ is a symmetric regular Dirichlet form on
$L^2(D;m)$. It is transient by (\ref{eq5.21}) and Poincar\'e's
inequality (see \cite[Example 1.5.1]{FOT}). Its generator is the
divergence form operator (cf. \eqref{eq5.dfop98})
\[
L=\sum^d_{i,j=1}\partial_{x_i}(q_{ij}(x)\partial_{x_j}).
\]
Define $\lambda_2$ by (\ref{eq5.20}). By Poincar\'e's inequality $\lambda_2>0$, so by Remark
\ref{rem5.19}, Corollary \ref{cor5.fidode5} applies with
$\beta(t)=e^{-2t\lambda_2}$, $t>0$. Note that  in general, even
for regular $h$, the process $h(X)$ is not a semimartingale under
$P_x$.  It is known that if  $h\in H^1_0(D)$, then for $m$-a.e.
$x\in D$, under the measure $P_x$,   it is a Dirichlet process  in
the sense of F\"ollmer (for details and refinements see
\cite{R:PA}).
\end{example}

To get pointwise estimates for $V^*_{T_s}$, in the rest of this
subsection we assume that $D$ is connected and $\BM^D$ is strongly
Feller, that is $R^D_1(\BB_b(D))\subset C_b(D)$, and   $R^D_11\in C_{\infty}(D)$.
Therefore assumptions I--III of \cite[Section 6.4]{FOT} are
satisfied.  Recall that  $\lambda_2$  is defined by (\ref{eq5.20}).
By \cite[Theorem 6.4.4]{FOT}, for every $\lambda<\lambda_2$ we have
\[
c_\lambda:=\sup_{x\in D}\mathbb E_xe^{\lambda\tau_D}<\infty.
\]

\begin{proposition}
Assume that $w,\hat h_\infty\in L^\infty(D;m)$.
Then for all $\lambda<\lambda_2$ and $x\in D$,
\begin{align*}
V^*_{T_s}(x)&\le c_\lambda e^{-\lambda T_s}(\|w\|_{L^\infty(D;m)}+
\|\hat h_\infty\|_{L^\infty(D;m)})\\&\le  c_\lambda e^{-\lambda T_s}(\|w\|_{L^\infty(D;m)}+
\|h-\gamma\|_{L^\infty(D;m)}).
\end{align*}
\end{proposition}
\begin{proof}
Let $\rho$ be a positive function in $L^\infty(D;m)$. Then for any $t>0$,
\begin{align*}
P^D_t\rho(x)=\BE_x\fch_{\{t<\tau_D\}}\rho(X_t)& \le
P_x(t<\tau_D)\|\rho\|_{L^\infty(D;m)}
=P_x(e^t<e^{\tau_D})\|\rho\|_{L^\infty(D;m)}\\
&\le e^{-\lambda t}
\BE_xe^{\lambda\tau_D}\|\rho\|_{L^\infty(D;m)}\le e^{-\lambda t}
c_\lambda\|\rho\|_{L^\infty(D;m)}.
\end{align*}
The desired result now follows from Theorem \ref{th5.1}.
\end{proof}

\subsection{The semigroup theory approach}
\label{sec5.5}

Let $m$ be a $\sigma$-finite positive Borel measure on $E$. In
what follows $Y$ denotes the Banach space $C_{\infty}(D)$ or $L^p(D;m)$
with $p\ge 1$. By $\|\cdot\|_Y$ we denote the natural norm on $Y$.
Assume that $(P^D_t)$ is a $C_{0}$-semigroup on $Y$. By
\cite[Theorem 4.4.1]{Pazy}, if for some $q\in[1,\infty)$,
\[
\int_0^\infty\|P^D_tf\|^q_Y\,dt<\infty,\quad f\in Y,
\]
then there are constants  $b\ge 1$ and $\lambda>0$ such
that
\begin{equation}
\label{eq5.14} \|P^D_tf\|_Y\le be^{-\lambda t}\|f\|_Y,\quad t>0.
\end{equation}
This estimate and Theorem \ref{th5.1} can be used to estimate
$V^*_{T_s}$ for some specific $w$ and $\hat h_{\infty}$.

The assumption that $(P^D_t)$ is a $C_0$-semigroup on $L^p(D;m)$
with $p\ge1$  is satisfied for instance  if $m$ is excessive, that
is
\[
\int_E P_tf\,dm\le \int_Ef\,dm,\quad f\in\BB^+(D).
\]
On the other hand, if $\BM$ is  Feller, then $(P^D_t)$ is a
$C_0$-semigroup of contractions on $C_{\infty}(D)$ (since $D$ is assumed
to be Dirichlet regular).

The other useful result which can be applied in our context  says
(see, e.g., \cite[Theorem 4.4.3]{Pazy}) that if $(P^D_t)$ is
analytic and $-\lambda_1<0$, where $-\lambda_1$ is defined by (\ref{eq5.23}), then
there are constants $b\ge 1$ and $\lambda>0$ such that
(\ref{eq5.14}) is satisfied. Note also that if $\BM^D$ is
$m$-symmetric, i.e. $\int_E P^D_tf\cdot g\,dm=\int_E f\cdot  P^D_t
g\,dm$ for $f,g\in\BB^+(E)$, then $(P^D_t)$ is analytic on
$L^p(D;m)$ for any $p>1$ (see \cite{LP}).

For analyticity of
$(P^D_t)$ on $C_{\infty}(D)$ see, e.g., \cite[Section
1.2.2]{Taira} and \cite[Section 3]{Cerrai}.
If $(P^D_t)$ is ultracontractive and  is  associated with a symmetric Dirichlet form $(\EE,D(\EE))$, then
by \cite[Theorem 2.1.5]{DaS}, $(P^D_t)$ is analytic on $L^\infty(D)$.
Finally,  note that in  \cite{GT} it is proved that if $(P^D_t)$ is analytic on a Banach space $Y$, then for any Bernstein function $\psi:(0,\infty)\to [0,\infty)$ the subordinated semigroup
\[
P^{D,\psi}_tf:=\int_0^\infty P^D_sf\,\mu_t(ds),\quad t>0,\, f\in Y,
\]
is again analytic. Here $(\mu_t)_{t\ge 0}$ is a vaguely continuous semigroup of positive Borel measures on $[0,\infty)$ with $\mu_t([0,\infty))\le 1$, which represents
the Bernstein function $\psi$, i.e.
\[
e^{-t\psi(\lambda)}=\int_0^\infty e^{-\lambda s}\,\mu_t(ds),\quad t>0,\, \lambda>0.
\]

\section{Valuation of American options}
\label{sec6}

We consider  $d$-dimensional dividend paying  exponential L\'evy
models. In these models, under a risk-neutral measure (generally
nonunique), the evolution of prices, on the time interval
$[0,\infty)$, of financial assets with initial prices
$x_1>0,\dots,x_d>0$ at time $0$ is modeled by a Markov process
$\BM=(X,P_{x})$ (with $x=(x_1,\dots,x_d)$) of the form
\begin{equation}
\label{eq6.1}
X^i_t=X^i_0e^{(r-\delta_i)t+\xi^i_t},\quad t\ge0,
\end{equation}
where  $r\ge0$ is the interest rate, $\delta_i\ge0$
$i=1,\dots,d$, are dividend rates and $\xi=(\xi^1,\dots,\xi^d)$ is some L\'evy process with $\xi_0=0$ and the characteristic triple chosen so that if $\delta_i=0$, $i=1,\dots,d$, then the discounted price processes $t\mapsto e^{-rt}X^i_t=e^{\xi^i_t}$ are martingales under $P_x$.
The state space of $\BM$ is $E=\{(x_1,\dots,x_d)\in\BR^d:x_i>0,i=1,\dots,d\}$ and its life time is $\zeta=\infty$. The generator of $\BM$ has the form
\begin{equation}
\label{eq6.2}
Lf(x)=L_{BS}f(x)+L_If(x),\quad f\in C_c(\BR^d),\,x\in \BR^d,
\end{equation}
where $L_{BS}$ is the Black-Scholes operator
\[
L_{BS}=\frac12\sum^d_{i,j=1}a_{ij}x_ix_j\partial^2_{x_ix_j}  +\sum^d_{i=1}(r-\delta_i)x_i\partial_{x_i}
\]
with some positive definite symmetric matrix $a=\{a_{ij}\}$ (volatility matrix).
The operator  $L_I$ is defined by
\[
L_If(x)=\int_{\BR^d}\Big(f(x_1e^{y_1},\dots,x_de^{y_d})-f(x)
-\sum_{i=1}^dx_i(e^{y_i}-1)\partial_{x_i}f(x)\Big)\,\nu(dy),
\]
where $\nu$ is a  L\'evy measure  satisfying some additional
integrability conditions (see below). For more details on the
model see \cite{CT,KR:MF}. When $\nu=0$, the above model reduces
to the multidimensional Black-Scholes model analysed carefully in
\cite{KR:AMO,R:SSR}. From the assumptions on the model it follows
that for $i=1,\dots,d$ we have
\begin{equation}
\label{eq6.3} X^i_t=x_i+\int^t_0(r-\delta_i)X^i_s\,ds
+ M^i_t,\quad t\ge0,\quad P_{x}\mbox{-a.s.},
\end{equation}
for some martingales $M^i$, $i=1,\dots,d$ (see \cite[(2.5)]{KR:MF}).

By the definition, the value function $V_T$ of the American  option
with payoff function  $h:\BR^d\rightarrow[0,\infty)$ and
exercise time $T>0$ is given by the formula
\[
V_T(x)=\sup_{\sigma\in\TT^{T}}\mathbb E_{x}e^{-r\sigma}h(X_\sigma)=\sup_{\sigma\in\TT^{T}}\mathbb E^r_{x}h(X_\sigma),\quad x\in E,
\]
and the value function of the perpetual American  option by the formula
\[
V(x)=\sup_{\sigma\in\TT}\mathbb E_{x}e^{-r\sigma}h(X_\sigma)=\sup_{\sigma\in\TT}\mathbb E^r_{x}h(X_\sigma),\quad x\in E,
\]
where $\BM^r=(X,P^r_{x})$ is a Markov process which is the transformation of the process $\mathbb M$ by the multiplicative functional $A_t=e^{-rt}$, $t\ge 0$, and $\BE^r_x$ denotes the expectation with respect $P^r_x$.

In what follows, we  assume that the payoff  function $h$ is
positive, continuous and  there is $K>0$ such that
\[
|h(x)|\le K(1+|x|),\quad x\in\BR^d.
\]
As for $\nu$, we will assume that
\begin{equation}
\label{eq6.4}
\int_{\{|y|>1\}}|y|^2e^{\beta|y|}\,\nu(dy)<\infty\quad
\end{equation}
for some $\beta>1$ if $h$ is bounded, and $\beta>2$ in the
general case. Note that (\ref{eq6.4}) implies  that $E_x|X_t|^{\beta}<\infty$, $t\ge0$ (see \cite[Theorem 25.3]{Sa}). We will also assume that
\[
\det a>0.
\]
In the proof of the next theorem we  apply our general results to the above model.
In the notation of Section \ref{sec5}, in
this model $D=E$ and $\BM^r$ is the driving process. As for the data, we have $\varphi=h$ and $g=\psi=0$. Consequently, $\gamma=0$ and $w=|h|$.

\begin{theorem}
\label{th6.1}
Let $\BM$ and $h$ satisfy the assumptions described above.  Assume additionally that $\delta_i>0$ for $i=1,\dots, d$ or $\|h\|_{\infty}<\infty$. Then
$V_T(s,x)\nearrow V(x)$ as $T\rightarrow\infty$ for all $s\ge0$ and  $x\in E$. In fact, for all $T>0$ and $x\in E$,
\[
V(x)-V_T(x)\le 2e^{-rT}\|h\|_{\infty}
\]
if $h$ is bounded, and in the general case,
\begin{equation}
\label{eq6.5}
V(x)-V_T(x)\le 2K(e^{-rT}+|x|\sum^{d}_{i=1}e^{-\delta_iT}).
\end{equation}
\end{theorem}
\begin{proof}
The first inequality is immediate from Corollary \ref{cor5.1}.
To prove (\ref{eq6.5}),  we first observe that
it follows from (\ref{eq6.1}) and the fact that $t\mapsto e^{\xi^i_t}$ is a martingale that
\begin{equation}
\label{eq6.6} \mathbb E_{x}X^i_t=x_ie^{(r-\delta_i)t},\quad t\ge0.
\end{equation}
Let $\eta_t^i=e^{-rt}X^i_t$.
By (\ref{eq6.3}) and the integration by parts formula,
\begin{equation}
\label{eq6.7}
\eta_t=x_ie^{-rt}-\delta_i\int^t_0e^{-rs}X^i_{s}\,ds
+\int^t_0e^{-rs}\,dM^i_{s},\quad t\ge0,
\end{equation}
so $\eta$ is a positive supermartingale with respect to $P_x$, and hence $X$ is a supermartingale
with respect to $P^r_x$. It follows that  $h(X)$ is a supermartingale with respect to $P^r_x$. We shall show that $h(X)$
is of class (D) with respect to $P^r_x$.
Observe  that $\mathbb E_x\eta^i_t\le x_ie^{-\delta_it}\le x_i$. Therefore, by \cite[Theorem VI.6]{DM},  $\{\eta_t\}$ converges $P_x$-a.s. as $t\rightarrow\infty$ to some integrable random variable.
Moreover, under (\ref{eq6.4}),  $\lim_{p\rightarrow1^+}\mathbb E_x|X^i_1|^p=\mathbb E_x|X^i_1|=x_ie^{(r-\delta_i)}$. Hence $\mathbb E_x|\eta^i_1|^p\le x_ie^{-\delta_1/2}$ for some $p>1$, As a result, since $\eta^i$ is stationary and with independent increments, $\mathbb E_x|\eta^i_t|\le x_ie^{-\delta_it/2}$, $t\ge0$. Therefore, if $\delta_i>0$ for $i=1,\dots, d$, then
$\sup_{t\ge0}\mathbb E_x|\eta^i_t|^p<\infty$ for some $p>1$, so $\{\eta_t\}$ converges in $L^1(dP_x)$. Furthermore, by (\ref{eq6.6}), $\mathbb E_x\int^\infty_0e^{-rs}X^i_s\,ds<\infty$, so by monotone convergence,
$\int^t_0e^{-r\theta}X^i_\theta\,d\theta\rightarrow
\int^\infty_0e^{-r\theta}X^i_\theta\,d\theta$ in $L^1(dP_x)$ as $t\rightarrow\infty$.
Therefore the martingale $t\mapsto\int^t_0e^{-r\theta}\,dM_{\theta}$ is convergent in $L^1(dP_x)$. Consequently, it is uniformly integrable with respect to $P_x$ and hence
of class (D) (with respect to $P_x$) by \cite[Theorem VI.23]{DM}.  Since $t\mapsto\int^t_0e^{-rs}X^i_s\,ds$ is also of class (D) with respect to $P_x$, it follows from  (\ref{eq6.7}) that $\eta$ is of class (D)
with respect to $P_x$. Clearly, this implies that  $t\mapsto e^{-rt}h(X_t)$ is of class (D) with respect to $P_x$.
Therefore, $t\mapsto h(X_t)$ is of class (D) with respect to $P^r_x$. Applying now Corollary \ref{cor5.2} gives the second assertion of the theorem.
\end{proof}

In the
language of the option pricing theory  Theorem \ref{th6.1}  says
that in  exponential L\`evy models satisfying the assumptions
given at the beginning of this section the fair price of an
American option with payoff function $h$ and maturity $T$
converges to the  fair price of the corresponding perpetual
American option. This result generalizes the corresponding result
from \cite{R:SSR} proved (by a different method) for
multidimensional Black-Scholes  models, i.e. when $\nu=0$, and
with the additional assumption that $h$ is convex and
Lipschitz. Note, however, that the method of \cite{R:SSR} together
with the results from \cite{KR:AMO} also provides  the early
exercise formula for perpetual options.

\end{document}